\documentclass[12pt]{amsart}

\usepackage{amsmath,amsthm,amscd,amsfonts,amssymb,enumerate}
\usepackage{graphicx}
\usepackage{color}
\usepackage[colorlinks]{hyperref}
\usepackage{amsfonts,amssymb,amscd,amsmath,enumerate,url,verbatim}
 \usepackage[dvips]{epsfig}
 \usepackage[none]{hyphenat}
\usepackage{amsmath,amssymb,amsfonts,enumerate,amsthm}
 \usepackage{amsgen, amstext,amsbsy,amsopn, amsthm, amsfonts,amssymb,amscd,amsmat
 h,euscript,enumerate,url,verbatim,calc,xypic}
 \usepackage{latexsym}
 \usepackage{graphics}
 \usepackage{color}
\headsep=1cm \numberwithin{equation}{section}
\numberwithin{equation}{section}

\numberwithin{equation}{section}
\input xy
\xyoption{all}

\newtheorem{thm}{Theorem}[section]
\newtheorem{cor}[thm]{Corollary}

\newtheorem{lem}[thm]{Lemma}
\newtheorem{prop}[thm]{Proposition}
\newtheorem{defn}[thm]{Definition}

\newtheorem{exam}[thm]{Example}
\newtheorem{rem}[thm]{Remark}

\newtheorem{note}[thm]{Notation}

\newcommand{\coker}{\operatorname{Coker}\,}
\newcommand{\Hom}{\operatorname{Hom}\,}
\newcommand{\Ext}{\operatorname{Ext}\,}

\newcommand{\Spec}{\operatorname{Spec}\,}

\newcommand{\Ass}{\operatorname{Ass}\,}

\newcommand{\Supp}{\operatorname{Supp}\,}

\renewcommand{\dim}{\operatorname{dim}\,}

\renewcommand{\deg}{\operatorname{deg}\,}

\newcommand{\Tr}{\operatorname{Tr}\,}

\newcommand{\reg}{\operatorname{reg}\,}
\newcommand{\proj}{\operatorname{Proj}\,}

\newcommand{\fa}{\mathfrak{a}}

\newcommand{\fp}{\mathfrak{p}}

\begin{document}
\bibliographystyle{amsplain}


\title[Linked sheaves of modules]
 {Linked sheaves of modules}

\address{Faculty of mathematical sciences and computer,
amirkabir university and Iran National Science Foundation, Tehran, iran.} 

\email{frahmati@aut.ac.ir}

\email{sayyarikh@gmail.com}
\bibliographystyle{amsplain}

     \author[F. Rahmati]{Farhad Rahmati}
     \author[kh. sayyari]{khadijeh sayyari}

\keywords{Linkage of modules, sheaves of modules.}

\subjclass[2010]{13C40, 13C14, 14F06, 18F20.}


\maketitle


\begin{abstract} 
We introduce a notion of linkage for sheaves of modules on connected Noetherian schemes, extending classical linkage of modules. Linkage is defined for stable sheaves admitting finite free resolutions via the transpose and syzygy functors. We show that linkedness is a local property and that, on affine schemes, a coherent sheaf is linked if and only if its module of global sections is linked.

We further show that linkage is preserved under restriction and, under suitable rank conditions, under gluing over connected schemes. We also obtain criteria for the existence of linked subsheaves, when the structure sheaf is not a domain. In the projective setting, we compare invariants of linked sheaves, including graded cohomology modules, Castelnuovo–Mumford regularity, and Hilbert polynomials.

\end{abstract}

\bibliographystyle{amsplain}
 \section{Introduction}
Linkage theory has its origins in classical algebraic geometry, beginning with the work of  (1870) and M. Noether \cite{No} (1882), on the classification of algebraic curves. A decisive modern development was given by Peskine and Szpiro, (1974) \cite{PS}, who reformulated linkage in algebraic terms by introducing linkage of ideals in Cohen–Macaulay local rings via regular sequences. Since then, linkage has become an important tool in commutative algebra and algebraic geometry, closely connected to homological methods, syzygies, and duality.

The theory was later extended from ideals to finitely generated modules, notably through the work of Auslander and Bridger and further developments by Martsinkovsky, Strooker, Yoshino, and others. These extensions revealed that linkage reflects deep structural relations between modules. From a geometric viewpoint, this naturally leads to the problem of formulating and studying linkage in the category of sheaves of modules on schemes.

The aim of this paper is to develop a systematic theory of linkage of sheaves of modules, viewed as a sheaf-theoretic generalization of linkage of modules. Throughout the paper, we work over connected Noetherian schemes and consider coherent sheaves (or, more generally, sheaves admitting finite free resolutions). Using the transpose and syzygy constructions, we introduce a notion of linkage for stable sheaves that extends the classical definition for modules (Definition \ref{B.2}).

A first group of results concerns the functors involved in the definition of linkage. In Sections 2 and 3, we study the transpose and syzygy functors for sheaves of modules and show that they behave well with respect to restriction to open subsets. In particular, we prove that these constructions commute with localization (Theorem \ref{B5}) and that a coherent sheaf is locally free if and only if its transpose vanishes (Proposition \ref{B4}). These results ensure that the definition of linkage is well behaved in the sheaf-theoretic setting.

One of the main results of the paper is that linkedness of sheaves is a local property. More precisely, we show that a sheaf of modules is linked if and only if its restriction to every open subset is linked, and in the coherent case it suffices to check this on affine open subsets (Theorem \ref{C2.3}). As a consequence, when the scheme is affine, linkage of sheaves is equivalent to linkage of their modules of global sections (Theorem \ref{C1} and Corollary \ref{D}). This result provides a precise bridge between classical linkage of modules and linkage of sheaves.

Another central theme of the paper is the behavior of linkage under gluing constructions. We prove that, under suitable rank conditions, a sheaf obtained by gluing a family of linked sheaves over connected schemes is itself linked (Theorems \ref{C4} and \ref{C}). These results show that linkage is compatible with standard geometric constructions and allows the construction of global linked sheaves from local data.

We also investigate the existence of linked subsheaves. In particular, for open subsets whose structure sheaf is not an integral domain, we show that a sheaf of modules admits a linked subsheaf if and only if a natural condition involving associated primes is satisfied (Proposition \ref{L1}). This provides a criterion for the existence of nontrivial linked subsheaves and leads to the existence of maximal linked subsheaves in this setting.

In the final section, we study linkage of sheaves on projective schemes. We compare invariants of linked sheaves such as graded cohomology modules, Castelnuovo–Mumford regularity, Hilbert polynomials, degrees, and indices of regularity. In particular, we relate the regularity of a sheaf to that of its linked sheaf via the transpose and syzygy constructions (Theorem \ref{D3} and \ref{D1}) and extend known results on degrees and Hilbert polynomials of linked modules to the sheaf-theoretic context (Theorem \ref{D2}).

The paper is organized as follows. In Section 2, we introduce the transpose and syzygy functors for sheaves of modules. Section 3 studies their main properties and their relation to locally free sheaves. In Section 4, we define linked sheaves and prove the locality, gluing, and existence results. Finally, Section 5 is devoted to linkage on projective schemes and the comparison of invariants of linked sheaves.

Thorough out the text, we assume that $X$ is a connected scheme and $\mathfrak{F}$ is a sheaf of $\mathcal{O}_X$-modules such that $\mathfrak{F}$ is a coherent sheaf or has a free
 resolution. 
Also, $R$ is a commutative Noetherian ring with $1\neq 0,$ all $R$-modules are finitely generated and $k$ is a field.
\section{Some functors over the category of sheaves of modules}

In this section, using free resolutions of a sheaf, we introduce the operations
$\overline{\Tr}$ and $\overline{\lambda}$, and we show that they define functors
on the category of coherent sheaves of modules on affine schemes. To this end,
we recall some definitions.

\begin{defn}\label{B1}
Let 
$\overset{t_2}{\oplus}\mathcal{O}_X 
\overset{\phi}{\rightarrow} 
\overset{t_1}{\oplus}\mathcal{O}_X 
\overset{\varphi}{\rightarrow}
\mathfrak{F} \rightarrow 0$
be a free resolution of $\mathfrak{F}$. We say that $\mathfrak{F}$ is finitely 
presented. Also, we say $\mathfrak{F}$ has $(t_1 , t_2)$-ranks where $t_1 , t_2$ are the smallest values with this property.

By applying $(-)^* := \mathcal{H}om_{\mathcal{O}_X}(- ,\mathcal{O}_X)$, we obtain 
the exact sequence
\begin{equation}\label{e2}
0 \longrightarrow 
\mathfrak{F}^* 
\overset{\varphi^*}{\longrightarrow} 
(\overset{t_1}{\oplus}\mathcal{O}_X)^* 
\overset{\phi^*}{\longrightarrow} 
(\overset{t_2}{\oplus}\mathcal{O}_X)^* 
\longrightarrow 
\overline{\Tr}\,\mathfrak{F} 
\longrightarrow 0,
\end{equation}
where $\overline{\Tr}\,\mathfrak{F}$, the transpose of $\mathfrak{F}$, denotes 
$\coker(\phi^*)$. Similarly, we define
\[
\overline{\lambda}\,\mathfrak{F} := \coker(\varphi^*) = \Omega(\overline{\Tr}\,\mathfrak{F}),
\]
where $\Omega$ denotes the first syzygy. Hence, we obtain the further exact 
sequences
\begin{equation}
0 \longrightarrow 
\mathfrak{F}^* 
\overset{\varphi^*}{\longrightarrow} 
(\overset{t_1}{\oplus}\mathcal{O}_X)^*
\longrightarrow 
\overline{\lambda}\,\mathfrak{F} 
\longrightarrow 0
\end{equation}
and
\begin{equation}\label{e3}
0 \longrightarrow 
\overline{\lambda}\,\mathfrak{F} 
\overset{\phi^*}{\longrightarrow} 
(\overset{t_2}{\oplus}\mathcal{O}_X)^* 
\longrightarrow 
\overline{\Tr}\,\mathfrak{F} 
\longrightarrow 0.
\end{equation}
\end{defn}

By this definition, the following remark is notable.

\begin{rem}\label{B21}
\begin{itemize}
\item[(1)]
 Via \cite[Explation of Defenition 2.5.16.1, Page 121]{H}, $\mathfrak{F}$ is finitely presented if and only if $\mathfrak{F}$ and $\Omega\mathfrak{F}$ are generated by a finite number of global sections.
 \item [(2)]
By \cite[Exercise 2.1.2 and p.\ 65]{H}, 
$\mathfrak{F}^*$, $\overline{\Tr}\,\mathfrak{F}$, and 
$\overline{\lambda}\,\mathfrak{F}$ are sheaves of 
$\mathcal{O}_X$-modules. Also, If $\mathfrak{F}$ is quasi-coherent (respectively coherent), then by 
\cite[Proposition 2.5.7]{H}, the sheaves 
$\overline{\Tr}\,\mathfrak{F}$ and $\overline{\lambda}\,\mathfrak{F}$ 
are also quasi-coherent (respectively coherent).

\item[(3)]
If $\mathfrak{F}$ is free, then $\varphi$ is an isomorphism; hence both 
$\overline{\Tr}\,\mathfrak{F}$ and $\overline{\lambda}\,\mathfrak{F}$ vanish. 
\end{itemize}
\end{rem}

\begin{exam}
Let $X=\Spec(R)$ be an affine Noetherian scheme and
$\fa$ ba an ideal of $R$.
Then
    $\widetilde{(\frac{R}{\fa})}$ is finitely presented and it has $(1 , \mu (\fa))$-rank. Also, $\overline{\lambda}\widetilde{R/\fa}\cong \widetilde{R/(0:_R\fa)}.$ 
\end{exam}

An $R$-module $M$ is said to be stable if it has no free summands.  
Two $R$-modules $M$ and $N$ are said to be stably isomorphic, denoted
$\underline{M} \cong \underline{N}$,
if there exist free $R$-modules of finite rank $H$ and $W$ such that
$H \oplus M \cong N \oplus W.$
Inspired by this notion, we define the corresponding concept for sheaves.  
\begin{defn}
A sheaf 
$\mathfrak{F}$ is called stable if there does not exist a split exact sequence
$0 \longrightarrow\overset{t}{\oplus}\mathcal{O}_X \longrightarrow \mathfrak{F}.$
Equivalently, $\mathfrak{F}$ has no free direct summands.

Two sheaves $\mathfrak{F}$ and $\mathfrak{G}$ are stably isomorphic, written
$\underline{\mathfrak{F}} \cong \underline{\mathfrak{G}},$
if there exist free sheaves of finite rank $\mathfrak{h}$ and $\mathfrak{w}$ such that
$\mathfrak{h} \oplus \mathfrak{G} \cong \mathfrak{F} \oplus \mathfrak{w}.$
\end{defn}

The following lemma studies whether the stability of two modules is transferred to the stability of some special sheaves.
\begin{lem}\label{B}
 Let $X=\Spec(R)$ be an affine scheme and $M$ and $N$ be finitely generated $R$-modules. 
 Then the following statements are equivalent. 
 \begin{enumerate}
 \item [(i)]  $M$ and $N$ are stability isomorphic.
  \item [(ii)]  $\overset {\sim} M$ and $\overset {\sim} N$ are stability isomorphic.
  \item [(iii)] $\overline{\Tr}\overset {\sim} M$ and $\overline{\Tr}\overset {\sim} N$ are stability isomorphic.
\end{enumerate}
In particular, $M$ is stable if and only if $\widetilde{M}$ is stable.
\end{lem}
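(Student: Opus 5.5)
The plan is to reduce everything to the module level via the equivalence $M\mapsto\widetilde{M}$ between finitely generated $R$-modules and coherent $\mathcal{O}_X$-modules on $X=\Spec(R)$, with quasi-inverse $\Gamma(X,-)$. Then (iii) will follow from the corresponding statement for the Auslander--Bridger transpose of modules.

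For (i)$\Leftrightarrow$(ii), I use three facts. The functor $\widetilde{(-)}$ is additive, it is exact, and it sends $R^t$ to $\overset{t}{\oplus}\mathcal{O}_X$. So an isomorphism $H\oplus M\cong N\oplus W$ with $H,W$ free of finite rank gives $\widetilde H\oplus\widetilde M\cong\widetilde N\oplus\widetilde W$ with $\widetilde H,\widetilde W$ free of finite rank. Conversely, suppose $\mathfrak h\oplus\widetilde N\cong\widetilde M\oplus\mathfrak w$ with $\mathfrak h\cong\overset{s}{\oplus}\mathcal{O}_X$ and $\mathfrak w\cong\overset{r}{\oplus}\mathcal{O}_X$. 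Applying $\Gamma(X,-)$, which is additive and satisfies $\Gamma(X,\widetilde M)=M$, gives $R^s\oplus N\cong M\oplus R^r$. The same argument applied to a split monomorphism $\overset{t}{\oplus}\mathcal{O}_X\to\widetilde M$ and its retraction proves the final assertion: $\widetilde M$ has a free summand if and only if $M$ does.

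For (ii)$\Leftrightarrow$(iii), I first identify $\overline{\Tr}\,\widetilde M$ with $\widetilde{\Tr M}$. Take a finite free presentation $R^{t_2}\xrightarrow{\phi}R^{t_1}\to M\to 0$ and sheafify it to get a presentation of $\widetilde M$ as in Definition \ref{B1}. For finitely presented modules, $\mathcal{H}om_{\mathcal{O}_X}(\widetilde P,\mathcal{O}_X)\cong\widetilde{\Hom_R(P,R)}$; for free $P$ this is immediate. Hence $\phi^*$ is the sheafification of $\Hom_R(\phi,R)$. Since $\widetilde{(-)}$ is exact it preserves cokernels, so $\overline{\Tr}\,\widetilde M\cong\widetilde{\Tr M}$. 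By the already proved (i)$\Leftrightarrow$(ii), applied to $\Tr M$ and $\Tr N$, condition (iii) becomes: $\Tr M$ and $\Tr N$ are stably isomorphic as $R$-modules. It then remains to show that $M$ and $N$ are stably isomorphic if and only if $\Tr M$ and $\Tr N$ are, which is the classical Auslander--Bridger statement. For the direction from $M,N$ to their transposes, I use three standard facts:
\begin{itemize}
\item $\Tr$ is well defined up to stable isomorphism, independently of the presentation.
\item Taking direct sums of presentations gives $\Tr(A\oplus B)\cong\Tr A\oplus\Tr B$.
\item $\Tr$ of a free module is zero, by Remark \ref{B21}(3).
\end{itemize}
Then $H\oplus M\cong N\oplus W$ gives $\underline{\Tr M}\cong\underline{\Tr N}$. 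For the converse, I apply the same argument to $\Tr M$ and $\Tr N$ and use $\Tr\Tr M\cong M$ up to free summands, which is again Auslander--Bridger.

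The main obstacle is the well-definedness of $\Tr$ up to stable isomorphism, together with the identity $\underline{\Tr\Tr M}\cong\underline M$. Both depend on comparing two free presentations of the same module. For the comparison I would use the standard argument: any two minimal-rank presentations differ by adding trivial complexes $R^a\xrightarrow{\mathrm{id}}R^a$ and $R^b\to 0$, and dualizing such complexes changes the cokernel only by free summands. Since the paper allows citing the module theory, I would quote these facts rather than reprove them. I would make explicit only the compatibility of $\overline{\Tr}$ with sheafification, because that is the one genuinely sheaf-theoretic input.
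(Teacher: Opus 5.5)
Your proof is correct. The paper states Lemma~\ref{B} without any proof, so there is no authors' argument to compare against; your proposal supplies the missing argument, and its structure is the natural one. You get (i)$\Leftrightarrow$(ii) and the stability assertion from the equivalence between finitely generated $R$-modules and coherent $\mathcal{O}_X$-modules. You then reduce (iii) to the module statement via $\overline{\Tr}\,\widetilde M\cong\widetilde{\Tr M}$ and finish with $\underline{\Tr\Tr M}\cong\underline{M}$.

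Your direct route has one real advantage over the paper's surrounding material, which is logical independence. The identification $\overline{\Tr}\,\widetilde M\cong\widetilde{\Tr M}$ is Theorem~\ref{B3}(2) in the paper, but that proof opens with ``By Lemma~\ref{B}, we may assume that $M$ is stable''. Likewise, Lemma~\ref{RR} (independence of the chosen presentation) is deduced from Theorem~\ref{B3}. So you could not cite either result here, and you rightly prove the identification from exactness of $\widetilde{(-)}$ and $\mathcal{H}om_{\mathcal{O}_X}(\widetilde P,\mathcal{O}_X)\cong\widetilde{\Hom_R(P,R)}$ for free $P$. To make this airtight, add one sentence: every free presentation $\oplus^{t_2}\mathcal{O}_X\to\oplus^{t_1}\mathcal{O}_X\to\widetilde M\to 0$ is the sheafification of a free presentation of $M$. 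This follows by applying $\Gamma(X,-)$, which is exact on quasi-coherent sheaves on an affine scheme, and using that $\widetilde{(-)}$ is fully faithful. Then the identification holds for whatever presentation Definition~\ref{B1} uses, and independence of the choice on the sheaf side follows from the module side.

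One caution about citations. Auslander--Bridger prove that $\Tr M$ is unique up to \emph{projective} summands, whereas stable isomorphism here is defined with \emph{free} summands. Over a non-local ring these notions differ: over a Dedekind domain with nontrivial class group, a non-principal invertible ideal is projectively, but not freely, stably equivalent to $0$. So quoting the projective version would not give (i)$\Leftrightarrow$(iii) as stated. What saves you is the mechanism you sketch, with one correction: drop ``minimal-rank'', since minimal presentations are not unique over a non-local ring and minimality is not needed. The correct statement is that any two free presentations of $M$ become isomorphic as complexes over $M$ after adding trivial complexes $R^a\xrightarrow{1}R^a$ and $R^b\to 0$. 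Dualizing these adds only the free modules $R^b$ to the cokernel. Similarly, $\Tr\Tr M$ computed from the dual presentation is literally $M$. State and use this free-summand version explicitly instead of citing the projective one.
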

It is possible for two sheaves to be stability isomorphic but not isomorphic.
\begin{exam}
\begin{enumerate}
\item[(i)]
Let $X = \Spec(R)$ be an affine scheme and let $\fa$ be an ideal of $R$.  
Then $(R/\fa)^{\sim}$ is stable. Set
$\mathfrak{F} := (R/\fa)^{\sim}$ and 
$\mathfrak{G} := (R/\fa)^{\sim} \oplus \mathcal{O}_X .$
It is clear that $\mathfrak{F}$ and $\mathfrak{G}$ are stably isomorphic, but they are not isomorphic.

\item[(ii)]
Let $G=<a_1,..., a_n>$ be an $\mathbb{Z}$-group such that $n>1$ and the order of $a_1$, $O(a_1),$ be infinity. Hence $<a_1>\cong \mathbb{Z}$ and so $G$ is not stable.
\end{enumerate}
\end{exam}

There are several relations between the transpose of a module and the transpose of its associated sheaf, as demonstrated below.

\begin{thm}\label{B3}
Let $X = \Spec(R)$ be an affine scheme. Then the following statements hold:
\begin{enumerate}
    \item $\underline{\overline{\lambda}\,\widetilde{M}} \cong \underline{\widetilde{\lambda M}}$.
    \item $\underline{\overline{\Tr}\,\widetilde{M}} \cong \underline{\widetilde{\Tr M}}$.
    \item $\underline{\Gamma(X,\,\overline{\lambda}\,\widetilde{M})}
           \cong \underline{\Gamma(X,\,\widetilde{\lambda M})}
           = \underline{\lambda M}$.
    \item $\underline{\Gamma(X,\,\overline{\Tr}\,\widetilde{M})}
           \cong \underline{\Gamma(X,\,\widetilde{\Tr M})}
           = \underline{\Tr M}$.
    \item $\underline{\overline{\Tr}(\overline{\Tr}\,\widetilde{M})}
           \cong \underline{\widetilde{M}}$.
\end{enumerate}
\end{thm}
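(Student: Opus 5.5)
The approach is to use the exactness of the functor $\widetilde{(-)}$ on $R$-modules and its compatibility with duals for finitely presented modules. Fix a finite free presentation $R^{t_2}\overset{f}{\rightarrow} R^{t_1}\overset{g}{\rightarrow} M\rightarrow 0$. Applying $\widetilde{(-)}$ gives a free presentation $\oplus^{t_2}\mathcal{O}_X\overset{\tilde f}{\rightarrow}\oplus^{t_1}\mathcal{O}_X\overset{\tilde g}{\rightarrow}\widetilde M\rightarrow 0$ of $\widetilde M$. Since $R$ is Noetherian and $M$ is finitely generated, there is a natural isomorphism
\[
\mathcal{H}om_{\mathcal{O}_X}(\widetilde M,\mathcal{O}_X)\cong \widetilde{\Hom_R(M,R)}
\]
(Hartshorne, Ex.\ II.5.3 and the compatibility of $\Hom$ with localization for finitely presented modules). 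This isomorphism is natural in $M$, so it identifies $\tilde f^*$ with $\widetilde{f^*}$ and $\tilde g^*$ with $\widetilde{g^*}$. Exactness of $\widetilde{(-)}$ then gives
\[
\coker(\tilde f^*)\cong\widetilde{\coker f^*}=\widetilde{\Tr M},\qquad \coker(\tilde g^*)\cong \widetilde{\lambda M}.
\]

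The main obstacle is that $\overline{\Tr}$ and $\overline{\lambda}$ are defined using some (minimal-rank) free resolution, which need not be the one induced from the chosen presentation of $M$. So for items (1) and (2) I have to show that $\overline{\Tr}$ and $\overline{\lambda}$ are well defined up to stable isomorphism. Since $X$ is affine, every free presentation of $\widetilde M$ is of the form $\widetilde{(\cdot)}$ applied to a free presentation of $M$: take global sections, which is exact on quasi-coherent sheaves over affine schemes. The classical Auslander–Bridger fact then applies: two presentations of $M$ yield transposes that differ by free summands, and likewise for $\lambda$ via Schanuel's lemma. Transporting this through $\widetilde{(-)}$ gives (1) and (2). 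Lemma~\ref{B} lets me move freely between stable isomorphism of modules and of their sheaves.

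Items (3) and (4) follow by applying $\Gamma(X,-)$, which is additive and satisfies $\Gamma(X,\widetilde N)=N$ and $\Gamma(X,\mathcal{O}_X^{t})=R^t$. Hence it carries a stable isomorphism $\mathfrak h\oplus\overline{\lambda}\widetilde M\cong\widetilde{\lambda M}\oplus\mathfrak w$ to a stable isomorphism of $R$-modules. The argument is the same for $\overline{\Tr}$.

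For (5), I apply (2) twice. Since $\overline{\Tr}\widetilde M\cong\widetilde{\Tr M}\oplus(\text{free})$ stably, and $\overline{\Tr}$ of a free sheaf vanishes by Remark~\ref{B21}(3), while $\overline{\Tr}$ of a direct sum is the direct sum of the transposes (using direct-sum presentations), I get
\[
\underline{\overline{\Tr}\,\overline{\Tr}\widetilde M}\cong\underline{\overline{\Tr}\widetilde{\Tr M}}\cong\underline{\widetilde{\Tr\Tr M}}.
\]
Finally, the classical relation $\underline{\Tr\Tr M}\cong\underline M$ gives $\underline{\widetilde M}$, again by Lemma~\ref{B}.
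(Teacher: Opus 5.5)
Your proposal is correct and follows essentially the paper's route. You sheafify the dual presentation sequences using exactness of $\widetilde{(-)}$ and $\mathcal{H}om_{\mathcal{O}_X}(\widetilde{M},\mathcal{O}_X)\cong\widetilde{\Hom_R(M,R)}$, pass to global sections for (3)--(4), and use the stable identity $\Tr\Tr\cong\mathrm{id}$ for (5). You also make two points precise that the paper passes over by fixing a minimal resolution and citing Definition~\ref{B1}: that every free presentation of $\widetilde{M}$ is induced from one of $M$, so the result does not depend on the presentation, and how $\overline{\Tr}$ is applied to a stably isomorphic sheaf in (5).
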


\begin{proof}
By Lemma~\ref{B}, we may assume that $M$ is stable.  
Take a minimal free resolution
$F_2 \xrightarrow{f} F_1 \xrightarrow{g} M \to 0.$
Applying $(-)^+$ gives the exact sequences
\begin{equation}\label{e1}
0 \to M^+ \xrightarrow{g^+} (F_1)^+ \to \lambda M \to 0,
\end{equation}
and
\begin{equation}\label{e3}
0 \to \lambda M \xrightarrow{f^+} (F_2)^+ \to \Tr M \to 0.
\end{equation}
\begin{itemize}
    \item[(1)]
    Applying the tilde functor to \eqref{e1}, and using \cite[Props.\ 2.5.5 and 2.5.2]{H}, we obtain
    \[
    0 \to 
    \mathcal{H}om_{\mathcal{O}_X}(\widetilde{M},\mathcal{O}_X)
    \to 
    \mathcal{H}om_{\mathcal{O}_X}(\widetilde{F_1},\mathcal{O}_X)
    \to 
    \widetilde{\lambda M}
    \to 0.
    \]
    The claim follows from 
    the definition \ref{B1}.    
    \item[(2)]
    Applying the same argument to \eqref{e3}, together with the isomorphism in (1) and the Five Lemma, proves the result.
    \item[(3) and (4)]
    These follow from \cite[Prop.\ 2.5.1]{H} together with parts (1) and (2).
    \item[(5)]
    By \cite[2.6]{A}, the identity $\Tr \Tr(-) \cong \mathrm{id}$ holds in the category of finitely generated modules.  
    Combined with the previous items, this yields
   $\overline{\Tr}(\overline{\Tr}\,\widetilde{M}) \cong \widetilde{M}.$
\end{itemize}
\end{proof}
In view of the definition of $\overline{\Tr}\,\mathfrak{F}$, it is natural to ask whether any choice of free 
resolution of $\mathfrak{F}$ has no effect, and whether 
$\overline{\Tr}\,\mathfrak{F}$ can be uniquely defined. 
As a consequence of Theorem \ref{B3}, the following lemma shows that, when $X$ is an affine scheme, this is indeed true up to ``stabilization.” We will return to this question in Section~3.
\begin{lem}\label{RR}
Let $X$ be an affine scheme. Then the functor $\overline{\Tr}$ descends to the stabilization of the category of coherent sheaves, sending a coherent sheaf $\mathfrak{F}$ to $\overline{\Tr}(\mathfrak{F})$.  
In particular, the functor $\overline{\lambda} := \Omega\,\overline{\Tr}$ is well defined on the stabilization as well. 
\end{lem}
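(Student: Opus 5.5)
On an affine scheme $X=\Spec(R)$, every coherent sheaf is of the form $\widetilde{M}$ for a finitely generated $R$-module $M=\Gamma(X,\mathfrak{F})$ \cite[Prop.\ 2.5.5]{H}. The plan is therefore to reduce the statement to the classical fact that $\Tr$ is well defined on the stable category of finitely generated $R$-modules \cite{A}, and to transport it through the tilde functor using Theorem~\ref{B3} and Lemma~\ref{B}.

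\textbf{Step 1: objects.} Let $\overset{t_2}{\oplus}\mathcal{O}_X\xrightarrow{\phi}\overset{t_1}{\oplus}\mathcal{O}_X\to\mathfrak{F}\to 0$ be any free presentation. Taking global sections, which is exact on quasi-coherent sheaves of an affine scheme, gives a free presentation $R^{t_2}\to R^{t_1}\to M\to 0$. Since $\mathcal{H}om$ between coherent sheaves on $\Spec R$ is $\widetilde{\Hom_R(-,-)}$, dualizing and applying tilde commute. Hence $\overline{\Tr}\,\mathfrak{F}\cong\widetilde{\Tr_{\phi}M}$, where $\Tr_\phi M$ is the module transpose computed from this particular presentation. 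By \cite{A}, transposes computed from two presentations of $M$ differ by free summands. Applying tilde turns these into free sheaves, so any two choices of $\overline{\Tr}\,\mathfrak{F}$ are stably isomorphic, and each is stably isomorphic to $\widetilde{\Tr M}$, in agreement with Theorem~\ref{B3}(2). Next, if $\underline{\mathfrak{F}}\cong\underline{\mathfrak{G}}$, then Lemma~\ref{B} gives $\underline{M}\cong\underline{N}$. Classically this yields $\underline{\Tr M}\cong\underline{\Tr N}$, so Lemma~\ref{B} again gives $\underline{\overline{\Tr}\,\mathfrak{F}}\cong\underline{\overline{\Tr}\,\mathfrak{G}}$. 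Alternatively, adding $\mathcal{O}_X$ to $\mathfrak{F}$ adds a free summand to the presentation, and dualizing only changes the transpose by a free summand.

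\textbf{Step 2: morphisms.} A morphism $\mathfrak{F}\to\mathfrak{G}$ is $\widetilde{h}$ for a unique $h\colon M\to N$. Lift $h$ to a map of free presentations, dualize, and take the induced map on cokernels to obtain $\overline{\Tr}(\widetilde{h})=\widetilde{\Tr h}$. Two liftings differ by a homotopy, and the induced maps on transposes then differ by a map factoring through the free sheaf $(\overset{t_2}{\oplus}\mathcal{O}_X)^*$. Likewise, morphisms factoring through free (equivalently projective, on affine $X$) sheaves are sent to morphisms factoring through free sheaves. So $\overline{\Tr}$ is well defined on the stable Hom groups, and functoriality follows from the module case transported by the equivalence $M\mapsto\widetilde{M}$.

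\textbf{Step 3: $\overline{\lambda}$.} The sequence \eqref{e3} exhibits $\overline{\lambda}\,\mathfrak{F}$ as a first syzygy of $\overline{\Tr}\,\mathfrak{F}$. It then suffices to note that $\Omega$ is well defined on the stable category, by Schanuel's lemma applied to sections and then tilde. Hence $\overline{\lambda}=\Omega\,\overline{\Tr}$ descends as well.

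The main obstacle is Step 2: one must check that the homotopy argument is compatible with the sheaf-level definition. Concretely, the duality $(-)^*$ must commute with tilde on morphisms, not just on objects. I would handle this by using that $\Gamma(X,-)$ and tilde are quasi-inverse equivalences between coherent sheaves and finitely generated $R$-modules, and that this equivalence preserves free objects, so the whole stable-category argument can be carried out on modules and then transported.
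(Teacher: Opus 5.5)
Your proposal is correct and follows the same route as the paper. The paper proves the lemma in one line, citing Auslander's well-definedness of $\Tr$ up to stabilization \cite[2.5]{A}, the affine exactness/equivalence result \cite[2.5.6]{H}, and Theorem~\ref{B3} to carry the module statement through $\widetilde{(-)}$; your explicit treatment of morphisms (the difference of two lifts factoring through the dual of the degree-one free term) and of $\Omega$ via Schanuel's lemma only fills in details the paper leaves implicit.
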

\begin{proof}
It immediately follows from \cite[2.5]{A}, \cite[2.5.6]{H} and \ref{B3}.
\end{proof}
What is the relationship between the functors $\overline{\Tr}$ and $\overline{\lambda}$ and other functors? The next proposition answers this question.
\begin{prop}\label{B2}
Let $X = \Spec(R)$ be an affine scheme and $\mathfrak{F}$ be a coherent sheaf. There exists a natural exact sequence of functors:
$$ 0 \rightarrow 
\mathcal{E}xt^1_{\mathcal{O}_X}(\overline{\Tr} \mathfrak{F}, -)
   \rightarrow \mathfrak{F} \otimes_{\mathcal{O}_X} - 
   \rightarrow \mathcal{H}om_{\mathcal{O}_X}(\mathfrak{F}^*, -) 
   \rightarrow \mathcal{E}xt^2_{\mathcal{O}_X}(\overline{\Tr}\mathfrak{F}, -) 
   \rightarrow 0.$$
\end{prop}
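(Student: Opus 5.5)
This is the sheaf version of the Auslander–Bridger sequence, and I will reduce it to the module case via the tilde functor. Since $X=\Spec(R)$ is affine and $\mathfrak{F}$ is coherent, write $\mathfrak{F}=\widetilde{M}$ with $M$ finitely generated. I take the target functors to be evaluated on coherent (or quasi-coherent) sheaves $\mathfrak{G}=\widetilde{N}$. Fix a free presentation $F_2\xrightarrow{f}F_1\xrightarrow{g}M\to 0$ with $F_i$ finite free. Its tilde $\widetilde{F_2}\to\widetilde{F_1}\to\mathfrak{F}\to 0$ is a presentation as in Definition \ref{B1}, and $\overline{\Tr}\,\mathfrak{F}=\coker(\widetilde{f}^*)$. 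By \cite[Props.\ 2.5.2, 2.5.5]{H} we have $\widetilde{f}^*=\widetilde{f^+}$, so $\overline{\Tr}\,\mathfrak{F}\cong\widetilde{\Tr M}$ for this presentation. This is the computation in the proof of Theorem \ref{B3}(2). By Lemma \ref{RR}, a different presentation changes $\overline{\Tr}\,\mathfrak{F}$ only up to free summands. This does not affect $\mathcal{E}xt^i_{\mathcal{O}_X}(-,\mathfrak{G})$ for $i\ge 1$, so the statement is independent of choices.

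\textbf{Step 1 (module sequence).} I will recall, or reprove directly, the Auslander--Bridger exact sequence
$$0\to\Ext^1_R(\Tr M,N)\to M\otimes_R N\to\Hom_R(M^+,N)\to\Ext^2_R(\Tr M,N)\to 0,$$
natural in $N$. The proof I have in mind takes $0\to M^+\to F_1^+\xrightarrow{f^+}F_2^+\to\Tr M\to 0$, which is a free resolution of $\Tr M$ in degrees $\le 1$ extended by $M^+$. Apply $\Hom_R(-,N)$ and use $\Hom_R(F^+,N)\cong F\otimes_R N$ for finite free $F$. This yields the complex $F_2\otimes N\to F_1\otimes N\to\Hom(M^+,N)$. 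Its homology at $F_1\otimes N$ is $\Ext^1(\Tr M,N)$ modulo the image, and the cokernel of $F_2\otimes N\to F_1\otimes N$ is $M\otimes N$ by right exactness. Alternatively I split the long sequence into the two short exact sequences involving $\lambda M=\Omega\Tr M$. Then $\Ext^1(\Tr M,N)$ is the kernel of $M\otimes N\to\Hom(M^+,N)$, and $\Ext^2(\Tr M,N)\cong\Ext^1(\lambda M,N)$ is its cokernel. A careful bookkeeping of this dimension shifting is the main obstacle. In particular, one must check that the map $M\otimes N\to\Hom(M^+,N)$, $m\otimes n\mapsto(\alpha\mapsto\alpha(m)n)$, is the connecting map. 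I would simply cite \cite[Prop.\ 2.6]{A} for this step.

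\textbf{Step 2 (sheafify).} Since $\widetilde{\ }$ is exact and fully faithful on $R$-modules, applying it gives an exact sequence of sheaves. It then remains to identify each term. First, $\widetilde{M\otimes N}\cong\mathfrak{F}\otimes\mathfrak{G}$. Second, for finitely generated (hence finitely presented) $M$ over Noetherian $R$, we have $\widetilde{\Hom_R(M,N)}\cong\mathcal{H}om(\widetilde M,\widetilde N)$, so in particular $\widetilde{M^+}\cong\mathfrak{F}^*$ and $\widetilde{\Hom_R(M^+,N)}\cong\mathcal{H}om(\mathfrak{F}^*,\mathfrak{G})$. Third, $\widetilde{\Ext^i_R(\Tr M,N)}\cong\mathcal{E}xt^i_{\mathcal{O}_X}(\widetilde{\Tr M},\mathfrak{G})$ for coherent first argument on a Noetherian affine scheme (\cite[III.6.5]{H}-type statement). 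Combined with $\overline{\Tr}\,\mathfrak{F}\cong\widetilde{\Tr M}$, this gives the displayed sequence.

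\textbf{Step 3 (naturality).} Naturality in $\mathfrak{G}$ follows because morphisms $\widetilde N\to\widetilde{N'}$ are exactly $\widetilde{u}$ for $R$-linear $u\colon N\to N'$. All the identifications in Step 2 are natural in $N$, and the module sequence is natural in $N$.
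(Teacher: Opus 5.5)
Your proposal is correct and follows the same route as the paper: you write $\mathfrak{F}=\widetilde{M}$, invoke the Auslander--Bridger module sequence \cite[2.6]{A}, and transport it through the exact functor $\widetilde{(-)}$ using $\underline{\overline{\Tr}\,\widetilde{M}}\cong\underline{\widetilde{\Tr M}}$ from Theorem~\ref{B3}. You are more explicit than the paper in three places: identifying the sheafified $\otimes$, $\Hom$ and $\Ext$ terms, checking naturality in the argument $-$, and noting independence of the presentation (free summands do not affect $\mathcal{E}xt^{i}$ for $i\ge 1$); the paper's written proof instead addresses functoriality in $\mathfrak{F}$ via \cite[2.2]{A}.
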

\begin{proof}
Consider two coherent sheaves $\mathfrak{F}$ and $\mathfrak{G}.$ One can assume that $\mathfrak{F} = \widetilde{M}$ (respectively $\mathfrak{G} = \widetilde{N}$) where $M$ is a finitely generated $R$-module (respectively $N$).

Suppose that free resolutions of 
$\widetilde{M}$ and $\widetilde{N}$, together with a morphism 
$f : \widetilde{M} \to \widetilde{N}$, exist as in the following diagram:
\[
\begin{CD}
&&&& \oplus^{l_2}\mathcal{O}_X @>{h_1}>> \oplus^{l_1}\mathcal{O}_X 
     @>{g_1}>> \widetilde{M} @>>> 0 \\
&&&& &&&& @VV{f}V \\
&&&& \oplus^{l_4}\mathcal{O}_X @>{h_2}>> \oplus^{l_3}\mathcal{O}_X 
     @>{g_2}>> \widetilde{N} @>>> 0.
\end{CD}
\]




By \cite[2.6]{A}, one has the following commutative diagram:
\[
\begin{CD}
0 \rightarrow \Ext^1_R(\Tr M, -) 
   @>>> M \otimes_R - 
   @>>> \Hom_R(M^+, -) 
   @>>> \Ext^2_R(\Tr M, -) \rightarrow 0 \\
@VVV @VVV @VVV @VVV \\
0 \rightarrow \Ext^1_R(\Tr N, -) 
   @>>> N \otimes_R - 
   @>>> \Hom_R(N^+, -) 
   @>>> \Ext^2_R(\Tr N, -) \rightarrow 0,
\end{CD}
\]
where $(-)^+ := \Hom_R(-,R)$.

This and \ref{B3} imply the corresponding commutative diagram of sheaves:
\[
\begin{CD}
0 \rightarrow 
\mathcal{E}xt^1_{\mathcal{O}_X}(\overline{\Tr}\,\widetilde{M}, -)
   @>>> \widetilde{M} \otimes_{\mathcal{O}_X} - 
   @>>> \mathcal{H}om_{\mathcal{O}_X}((\widetilde{M})^*, -) 
   @>>> \mathcal{E}xt^2_{\mathcal{O}_X}(\overline{\Tr}\,\widetilde{M}, -) 
   \rightarrow 0 \\
@VV{\xi}V 
   @VV{f^* \otimes -}V 
   @VV{\mathcal{H}om(f^*,-)}V 
   @VVV \\
0 \rightarrow 
\mathcal{E}xt^1_{\mathcal{O}_X}(\overline{\Tr}\,\widetilde{N}, -)
   @>>> \widetilde{N} \otimes_{\mathcal{O}_X} - 
   @>>> \mathcal{H}om_{\mathcal{O}_X}((\widetilde{N})^*, -) 
   @>>> \mathcal{E}xt^2_{\mathcal{O}_X}(\overline{\Tr}\,\widetilde{N}, -) 
   \rightarrow 0.
\end{CD}
\]

By \cite[2.2]{A}, the induced morphism $\xi$ depends only on $f$. This completes the proof.
\end{proof}






 \section{Some properties of $\overline{\Tr}$ and $\overline{\lambda}$}
 
%
The goal of this section is to study the operations $\overline{\Tr}$ and $\overline{\lambda}$ on the category of finitely presented sheaves of modules.

First, note that any choice of free resolutions for $\mathfrak{F}$ has no effect, and both $\overline{\Tr}\,\mathfrak{F}$ and $\overline{\lambda}\,\mathfrak{F}$ are uniquely defined up to stabilization. Indeed, let $\fp$ be an arbitrary point of a scheme $X$. By Definition~\ref{B1}, the sheaf $\mathfrak{F}$ is generated by global sections, and the stalk $\mathfrak{F}_{\fp}$ is a finitely generated $(\mathcal{O}_X)_{\fp}$-module (for convenience, we write “$\mathcal{O}_{\fp}$–module’’). Hence, by \cite[2.5]{A}, both $\Tr(\mathfrak{F}_{\fp})$ and $\lambda(\mathfrak{F}_{\fp})$ are uniquely determined up to stabilization.

Considering stalks, we obtain the following commutative diagram:
\[
\begin{CD}
&&&&&&&\\
&&&& 0 @>>> (\mathfrak{F}^*)_{\fp} @>{(\varphi^*)_{\fp}}>> 
(\,\overset{t_1}{\oplus}\,\mathcal{O}_X^*)_{\fp} 
@>{(\phi^*)_{\fp}}>> 
(\,\overset{t_2}{\oplus}\,\mathcal{O}_X^*)_{\fp}
@>>> (\overline{\Tr}\,\mathfrak{F})_{\fp} @>>> 0\\
&&&&&& @VV{\cong}V @VV{\cong}V @VV{\cong}V @VV{\exists}V\\
&&&& 0 @>>> (\mathfrak{F}_{\fp})^{+} @>>> 
\overset{t_1}{\oplus}\,(\mathcal{O}_{\fp})^{+} 
@>>> \overset{t_2}{\oplus}\,(\mathcal{O}_{\fp})^{+} 
@>>> \Tr(\mathfrak{F}_{\fp}) @>>> 0
\end{CD}
\]
and therefore
$(\overline{\Tr}\,\mathfrak{F})_{\fp} \cong \Tr(\mathfrak{F}_{\fp}).$ 
This implies $\overline{\Tr}(\mathfrak{F})$ is uniquely determined up to stabilization. A similar argument shows that it is true for $\overline{\lambda}(\mathfrak{F})$.







It is well-known that, in the case where $X$ is connected, the rank of a locally free sheaf is the same everywhere. So, 
 for each $U \subseteq X$,
$(\overset{t}\oplus\mathcal{O}_X)\mid_U= \overset{t}\oplus(\mathcal{O}_X\mid_U).$ 
Therefore, in the case where $\mathfrak{F}$ is finitely presented,
$\mathfrak{F}\mid_U$ is so of the same ranks. One may ask
what is relationship between these oprations and their resterictions to an open subset. 


\begin{thm}\label{B5}
Assume that $\mathfrak{F}$ is finitely presented and $U$ is an open subset of $X$.
Then
$\underline{(\overline{\lambda}\mathfrak{F})}\mid_U \;\cong\;
\underline{\overline{\lambda}(\mathfrak{F}\mid_U)}$
and
$\underline{(\overline{\Tr}\mathfrak{F})}\mid_U \;\cong\;
\underline{\overline{\Tr}(\mathfrak{F}\mid_U)}.$
\end{thm}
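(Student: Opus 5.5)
The plan is to restrict the defining free presentation of $\mathfrak{F}$ to $U$ and check that every construction in Definition~\ref{B1} commutes with restriction. Fix a presentation
$\overset{t_2}{\oplus}\mathcal{O}_X \overset{\phi}{\rightarrow} \overset{t_1}{\oplus}\mathcal{O}_X \overset{\varphi}{\rightarrow} \mathfrak{F} \rightarrow 0$.
Restriction to an open subset is exact, since it is computed stalkwise and $(\mathfrak{G}\mid_U)_{\fp}=\mathfrak{G}_{\fp}$ for $\fp\in U$. Moreover, $(\overset{t}{\oplus}\mathcal{O}_X)\mid_U=\overset{t}{\oplus}(\mathcal{O}_X\mid_U)$, as noted just before the theorem. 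Hence
$\overset{t_2}{\oplus}\mathcal{O}_U \overset{\phi\mid_U}{\rightarrow} \overset{t_1}{\oplus}\mathcal{O}_U \overset{\varphi\mid_U}{\rightarrow} \mathfrak{F}\mid_U \rightarrow 0$
is a free presentation of $\mathfrak{F}\mid_U$. By the discussion at the start of this section, $\overline{\Tr}$ and $\overline{\lambda}$ are independent of the chosen presentation up to stabilization, so we may compute $\overline{\Tr}(\mathfrak{F}\mid_U)$ and $\overline{\lambda}(\mathfrak{F}\mid_U)$ from this restricted presentation.

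The key step is the natural isomorphism $\mathcal{H}om_{\mathcal{O}_X}(\mathfrak{G},\mathcal{O}_X)\mid_U \cong \mathcal{H}om_{\mathcal{O}_U}(\mathfrak{G}\mid_U,\mathcal{O}_U)$, which holds because $\mathcal{H}om$ is defined by $V\mapsto \Hom(\mathfrak{G}\mid_V,\mathcal{O}_V)$ and open subsets of $U$ are open in $X$. It is natural in $\mathfrak{G}$, so it carries $\phi^*\mid_U$ to $(\phi\mid_U)^*$ and $\varphi^*\mid_U$ to $(\varphi\mid_U)^*$.

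Next, restrict the exact sequence \eqref{e2} to $U$. Exactness of restriction preserves cokernels, so
$(\overline{\Tr}\mathfrak{F})\mid_U=\coker(\phi^*)\mid_U\cong \coker((\phi\mid_U)^*)=\overline{\Tr}(\mathfrak{F}\mid_U)$.
In the same way,
$(\overline{\lambda}\mathfrak{F})\mid_U=\coker(\varphi^*)\mid_U\cong\coker((\varphi\mid_U)^*)=\overline{\lambda}(\mathfrak{F}\mid_U)$.
These are genuine isomorphisms for this particular choice of presentation. Passing to stable classes, and using again the independence of the presentation, gives the stated stable isomorphisms.

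The main obstacle is this well-definedness up to stabilization on a non-affine $U$. The stalk argument only shows $(\overline{\Tr}\mathfrak{F})_{\fp}\cong\Tr(\mathfrak{F}_{\fp})$ stably, one point at a time, and the stabilizing free summands could a priori vary. I would avoid this by stating the result for the presentation obtained by restriction, which needs no comparison at all. Then, for an arbitrary second presentation of $\mathfrak{F}\mid_U$, I would adapt the Schanuel-type argument of \cite[2.5]{A} sheaf-theoretically. Two presentations are related by adding trivial complexes $\mathcal{O}\overset{\mathrm{id}}{\to}\mathcal{O}$ and split free summands, and dualizing these changes $\coker$ only by free summands. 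Connectedness of $X$ keeps the ranks constant, so these summands are free sheaves of finite rank on $U$.
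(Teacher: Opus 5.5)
Your main argument is the same as the paper's. You restrict the presentation to $U$, use that $\mathcal{H}om_{\mathcal{O}_X}(-,\mathcal{O}_X)\mid_U=\mathcal{H}om_{\mathcal{O}_U}((-)\mid_U,\mathcal{O}_U)$ naturally, and compare cokernels. The paper does this on stalks at each $\fp\in U$ and concludes from the stalkwise isomorphisms. Your sheaf-level version is cleaner: it gives a genuine isomorphism of sheaves on $U$ for the restricted presentation directly, whereas abstract stalk isomorphisms still need to come from a global morphism.

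The step that would fail is the Schanuel-type repair in your last paragraph. To compare two free presentations of $\mathfrak{F}\mid_U$ you must lift global sections of $\mathfrak{F}\mid_U$ along $\oplus^{s}\mathcal{O}_U\to\mathfrak{F}\mid_U$. On a non-affine $U$ free sheaves are not projective, and the obstruction lies in $H^1$ of the kernel. In fact, independence of the presentation is false there.

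Take $X=\mathbb{A}^3_k$, $U=X\setminus\{0\}$, $\mathfrak{F}=\mathcal{O}_X$, and $S=k[x,y,z]=\Gamma(U,\mathcal{O}_U)$ with $\mathfrak{m}=(x,y,z)$. The restriction of the trivial presentation gives $\overline{\lambda}(\mathfrak{F}\mid_U)=0=(\overline{\lambda}\mathfrak{F})\mid_U$. The Koszul complex, however, gives a presentation $\oplus^3\mathcal{O}_U\xrightarrow{d_2}\oplus^3\mathcal{O}_U\xrightarrow{(x\ y\ z)}\mathcal{O}_U\to 0$ that is exact on $U$. For this presentation $\overline{\lambda}(\mathcal{O}_U)\cong\mathcal{K}^*$, where $\mathcal{K}=\ker(x\ y\ z)$ is a rank-two bundle.

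Suppose $\mathcal{K}^*$, and hence $\mathcal{K}$, were stably free. Since $H^1_{\mathfrak{m}}(N)=0$, we have $\Gamma(U,\mathcal{K})=N:=\mathrm{Syz}_1(x,y,z)$, so taking global sections would make $N$ projective over $S$. But $0\to S\to S^3\to N\to 0$ is minimal, so $\operatorname{pd}N=1$, a contradiction.

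So keep the statement for the restricted presentation, which your main argument proves outright, and drop the repair. The same example shows that your doubt about the paper's stalkwise ``uniqueness up to stabilization'' at the start of Section 3 is justified.
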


\begin{proof}
We may assume that $\mathfrak{F}$ is stable. By this assumption,
\[
\overset{t_2}{\oplus}\mathcal{O}_X\mid_U \;\longrightarrow\;
\overset{t_1}{\oplus}\mathcal{O}_X\mid_U \;\longrightarrow\;
\mathfrak{F}\mid_U \;\longrightarrow\; 0,
\]
for some integers $t_1$ and $t_2$. Applying $(-)^* := \mathcal{H}om_{\mathcal{O}_X\mid_U}(-, \mathcal{O}_X\mid_U)$
yields the exact sequence
\begin{equation}\label{e'..}
0 \longrightarrow (\mathfrak{F}\mid_U)^* 
\longrightarrow (\overset{t_1}{\oplus}\mathcal{O}_X\mid_U)^*
\longrightarrow \overline{\lambda}(\mathfrak{F}\mid_U)
\longrightarrow 0.
\end{equation}
As $(\overline{\lambda}\mathfrak{F})_{\fp}
\cong ((\overline{\lambda}\mathfrak{F})\mid_U)_{\fp}$, it suffices to show that
$((\overline{\lambda}\mathfrak{F})\mid_U)_{\fp}
\;\cong\;
(\overline{\lambda}(\mathfrak{F}\mid_U))_{\fp}$ for every $\fp\in U$.

Let $\mathfrak{h}$ and $\mathfrak{G}$ be two arbitrary sheaves.
By the description of stalks,
\begin{align*}
(\mathcal{H}om_{\mathcal{O}_X\mid_U}(\mathfrak{G}\mid_U,\mathfrak{h}\mid_U))_{\fp}
&= \varinjlim_{\fp\in V\subseteq U}
\mathcal{H}om_{\mathcal{O}_X\mid_U}(\mathfrak{G}\mid_U,\mathfrak{h}\mid_U)(V) \\
&= \varinjlim_{\fp\in V}
\mathcal{H}om_{\mathcal{O}_X\mid_V}(\mathfrak{G}\mid_V,\mathfrak{h}\mid_V) \\
&= (\mathcal{H}om_{\mathcal{O}_X}(\mathfrak{G},\mathfrak{h}))_{\fp}.
\end{align*}

Using \cite[Exercise 2.1.2]{H}, we obtain the following commutative diagram:
\[
\begin{CD}
0 @>>> ((\mathfrak{F}\mid_U)^*)_{\fp}
    @>>> ((\overset{t_1}{\oplus}\mathcal{O}_X\mid_U)^*)_{\fp}
    @>>> (\overline{\lambda}(\mathfrak{F}\mid_U))_{\fp}
    @>>> 0 \\
@.     @VV{\cong}V @VV{\cong}V @VV{\exists}V\\
0 @>>> (\mathcal{H}om_{\mathcal{O}_X}(\mathfrak{F},\mathcal{O}_X))_{\fp}
    @>>> (\mathcal{H}om_{\mathcal{O}_X}(\overset{t_1}{\oplus}\mathcal{O}_X,\mathcal{O}_X))_{\fp}
    @>>> (\overline{\lambda}\mathfrak{F})_{\fp}
    @>>> 0.
\end{CD}
\]

Thus $(\overline{\lambda}(\mathfrak{F}\mid_U))_{\fp}
\cong (\overline{\lambda}\mathfrak{F})_{\fp}$,
which implies the result.

The second statement (for $\overline{\Tr}$) follows by an analogous argument.
\end{proof}

The following proposition considers the case in which 
$\overline{\Tr}(\overline{\Tr}\,\mathfrak{F}) = \mathfrak{F}$.

\begin{prop}\label{B6}
Let $\mathfrak{F}$ be a coherent sheaf. Then 
$\underline{\overline{\Tr}(\overline{\Tr}\,\mathfrak{F})}
= \underline{\mathfrak{F}}.$
\end{prop}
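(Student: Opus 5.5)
The plan is to reduce to the affine case and then glue along an open cover, using the compatibility of $\overline{\Tr}$ with restriction (Theorem \ref{B5}) and the affine statement Theorem \ref{B3}(5).

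\emph{Step 1: local statement.} Cover $X$ by affine opens $U_i=\Spec(R_i)$. On each $U_i$ the sheaf $\mathfrak{F}\mid_{U_i}$ is coherent, so $\mathfrak{F}\mid_{U_i}\cong\widetilde{M_i}$ for a finitely generated $R_i$-module $M_i$. Applying Theorem \ref{B5} twice (first to $\mathfrak{F}$, then to $\overline{\Tr}\,\mathfrak{F}$, which is coherent by Remark \ref{B21}(2) and finitely presented), we get
\[
\underline{(\overline{\Tr}(\overline{\Tr}\,\mathfrak{F}))\mid_{U_i}}\cong\underline{\overline{\Tr}((\overline{\Tr}\,\mathfrak{F})\mid_{U_i})}\cong\underline{\overline{\Tr}(\overline{\Tr}(\mathfrak{F}\mid_{U_i}))}\cong\underline{\widetilde{M_i}}=\underline{\mathfrak{F}\mid_{U_i}},
\]
where the third isomorphism is Theorem \ref{B3}(5).

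\emph{Step 2: stalkwise comparison.} Equivalently, we can work pointwise. For $\fp\in X$, the stalk computation at the start of Section 3 gives $(\overline{\Tr}\,\mathfrak{G})_{\fp}\cong\Tr(\mathfrak{G}_{\fp})$ for finitely presented $\mathfrak{G}$. Hence
\[
(\overline{\Tr}\,\overline{\Tr}\,\mathfrak{F})_{\fp}\cong\Tr\Tr(\mathfrak{F}_{\fp}),
\]
which is stably isomorphic to $\mathfrak{F}_{\fp}$ by \cite[2.6]{A}. This confirms that the two sheaves agree stably at every point and on every affine piece.

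\emph{Step 3: globalize (main obstacle).} Local stable isomorphisms need not glue to a global one, because the free summands and the isomorphisms may differ from chart to chart. I would handle this as follows. Fix a global presentation
\[
\overset{t_2}{\oplus}\mathcal{O}_X\xrightarrow{\phi}\overset{t_1}{\oplus}\mathcal{O}_X\to\mathfrak{F}\to 0.
\]
Dualizing gives a presentation of $\overline{\Tr}\,\mathfrak{F}$ by $\phi^*$, and dualizing again gives $\phi^{**}=\phi$. So with these canonical choices, $\overline{\Tr}\,\overline{\Tr}\,\mathfrak{F}=\coker(\phi^{**})=\coker(\phi)\cong\mathfrak{F}$ globally, and the only ambiguity is the choice of presentation.

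By the uniqueness-up-to-stabilization discussion at the start of Section 3 and Lemma \ref{RR}, changing the presentation only adds free summands. Because $X$ is connected, these free summands have constant rank, so the local stable isomorphisms are compatible with a single global one. The care needed lies in checking that the presentation of $\overline{\Tr}\,\mathfrak{F}$ used in the second application of $\overline{\Tr}$ may be taken to be the one induced by $\phi^*$. If a different presentation is used, I would compare the two via the lifting argument of \cite[2.5]{A}, performed on global free sheaves.
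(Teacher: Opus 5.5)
Your proof is correct, but the step that actually proves the statement is different from the paper's. The paper's proof is only your Step 1. It covers $X$ by affine opens $U_i=\Spec R_i$, applies Theorem \ref{B3}(5) on each $U_i$ to get $(\overline{\Tr}(\overline{\Tr}\,\mathfrak{F}))\mid_{U_i}\cong\mathfrak{F}\mid_{U_i}$ up to stabilization, and then says ``which implies the result''. That silently glues stable isomorphisms produced independently on each chart. This is exactly the step you flag: neither the free summands nor the isomorphisms need agree on overlaps.

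Your Step 3 avoids gluing altogether. Use the presentation $\phi^*$ of $\overline{\Tr}\,\mathfrak{F}$ induced by a global presentation $\phi$ of $\mathfrak{F}$. Then $\overline{\Tr}(\overline{\Tr}\,\mathfrak{F})=\coker(\phi^{**})\cong\coker(\phi)=\mathfrak{F}$ is a genuine global isomorphism. The statement for arbitrary choices then reduces to independence of $\overline{\Tr}$ from the presentation up to stabilization. The paper asserts this at the start of Section 3 and implicitly relies on it as well. Your argument is shorter and more rigorous than the paper's, and it makes Steps 1 and 2 unnecessary.

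Two parts of Step 3 should be dropped.
\begin{itemize}
\item The claim that connectedness gives the free summands constant rank, ``so the local stable isomorphisms are compatible with a single global one'', proves nothing. The $\phi^{**}=\phi$ computation does not need it.
\item The fallback of redoing the lifting argument of \cite[2.5]{A} ``on global free sheaves'' would not go through verbatim. On a non-affine $X$ the sheaf $\oplus^t\mathcal{O}_X$ is in general not a projective object. Lifting a morphism $\mathcal{O}_X\to\mathfrak{F}$ along a surjection $\mathfrak{E}\to\mathfrak{F}$ means lifting a global section, and this is obstructed by $H^1$ of the kernel.
\end{itemize}
So presentation-independence has to be taken from the paper's standing claim rather than rederived via Auslander's argument. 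Note that the paper justifies that claim only stalkwise, so it has the same local-to-global weakness.
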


\begin{proof}
We may assume that $\mathfrak{F}$ is stable. By assumption, 
$X$ can be covered by affine open subsets $U_i = \Spec R_i$ such that, for each $i$, 
there exists a finitely generated $R_i$-module $M_i$ with 
$\mathfrak{F}\mid_{U_i} \cong \widetilde{M_i}$.  
By \ref{B3}, 
\[
(\overline{\Tr}(\overline{\Tr}\,\mathfrak{F}))\mid_{U_i}
\cong \overline{\Tr}(\overline{\Tr}(\mathfrak{F}\mid_{U_i}))
\cong \mathfrak{F}\mid_{U_i},
\]
which implies the result.
\end{proof}

In view of the definition of $\overline{\Tr}\,\mathfrak{F}$, it is natural to ask whether
$\overline{\Tr}\,\mathfrak{F}$ vanishes. To this end, 
we introduce the notion of an 
$\mathcal{O}_X$-projective sheaf.

\begin{defn}\label{B22}
We say that $\mathfrak{F}$ is \emph{$\mathcal{O}_X$-projective} if 
$\mathfrak{F}$ is a direct summand of a free $\mathcal{O}_X$-module of finite rank.  
In other words, there exist an $\mathcal{O}_X$-module $\mathfrak{h}$ and a free 
$\mathcal{O}_X$-module $\mathfrak{G}$ of finite rank such that
$\mathfrak{G} = \mathfrak{F} \oplus \mathfrak{h}.$
\end{defn}

The next proposition shows the relationship between the $\mathcal{O}_X$-projectiveness of a coherent sheaf and the projectiveness of its global section.

\begin{prop}\label{B23}
Let $X = \Spec R$ be an affine scheme and $\mathfrak{F}$ be a coherent sheaf.
Then $\mathfrak{F}$ is $\mathcal{O}_X$-projective if and only if $\Gamma(X,\mathfrak{F})$ is projective.
\end{prop}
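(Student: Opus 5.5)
We use the equivalence between $R$-modules and quasi-coherent sheaves on $X=\Spec R$ given by $M\mapsto\widetilde{M}$ and $\mathfrak{F}\mapsto\Gamma(X,\mathfrak{F})$ \cite[Props.\ 2.5.1, 2.5.2, 2.5.5]{H}, together with the fact that both functors are additive and exact, hence preserve finite direct sums. Since $\mathfrak{F}$ is coherent, write $\mathfrak{F}\cong\widetilde{M}$ with $M=\Gamma(X,\mathfrak{F})$ finitely generated.

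($\Leftarrow$) Suppose $M$ is projective. Since $M$ is finitely generated, there is a surjection $R^n\to M$, and it splits. Hence $R^n\cong M\oplus N$ for some (finitely generated) $R$-module $N$. Applying the tilde functor gives
\[
\overset{n}{\oplus}\mathcal{O}_X=\widetilde{R^n}\cong\widetilde{M}\oplus\widetilde{N}=\mathfrak{F}\oplus\widetilde{N},
\]
because $\widetilde{R}=\mathcal{O}_X$ and tilde commutes with direct sums. So $\mathfrak{F}$ is $\mathcal{O}_X$-projective in the sense of Definition \ref{B22}.

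($\Rightarrow$) Suppose $\mathfrak{G}=\overset{n}{\oplus}\mathcal{O}_X\cong\mathfrak{F}\oplus\mathfrak{h}$. Applying $\Gamma(X,-)$, which commutes with finite direct sums of arbitrary sheaves, gives
\[
R^n\cong\Gamma(X,\mathfrak{F})\oplus\Gamma(X,\mathfrak{h}).
\]
Thus $M$ is a direct summand of a free module, hence projective.

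The only point needing care is that Definition \ref{B22} lets $\mathfrak{h}$ be an arbitrary $\mathcal{O}_X$-module, not assumed quasi-coherent. This does not matter in the ($\Rightarrow$) direction, since $\Gamma$ preserves finite direct sums of any sheaves of modules. If one also wants $\mathfrak{h}$ to be coherent, note that $\mathfrak{h}$ is the cokernel of the split injection $\mathfrak{F}\to\mathfrak{G}$, a map between coherent sheaves. It is therefore coherent, and its global sections form the finitely generated complement of $M$ in $R^n$. I expect no real obstacle; the argument reduces to the exactness and additivity of the two functors in the equivalence.
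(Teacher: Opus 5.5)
Your proof is correct and complete. The paper states Proposition \ref{B23} without proof, relying implicitly on the equivalence $M\mapsto\widetilde{M}$, $\mathfrak{F}\mapsto\Gamma(X,\mathfrak{F})$ (Hartshorne II.5.2 and II.5.5) that it uses elsewhere; your argument is exactly that standard argument, namely pushing a splitting $R^n\cong M\oplus N$ through the tilde functor and a splitting $\overset{n}{\oplus}\mathcal{O}_X\cong\mathfrak{F}\oplus\mathfrak{h}$ through $\Gamma(X,-)$, and you correctly handle both delicate points: coherence makes $M$ finitely generated, so the free sheaf has the finite rank required by Definition \ref{B22}, and $\Gamma(X,-)$ preserves finite direct sums even when $\mathfrak{h}$ is not assumed quasi-coherent.
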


In the remainder of this section, we classify locally free sheaves in terms of their transpose.

\begin{prop}\label{B4}
Let $\mathfrak{F}$ be a coherent sheaf. Then the following statements are equivalent:
\begin{itemize}
    \item[(1)] $\mathfrak{F}$ is locally free.
    \item[(2)] $\underline{\overline{\Tr}\,\mathfrak{F}} = 0$.
    \item[(3)] $\mathfrak{F}$ is $\mathcal{O}_X$-projective.
\end{itemize}
\end{prop}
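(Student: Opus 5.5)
We prove the equivalences by reducing to stalks and to affine pieces. The main ingredients are the stalk identification $(\overline{\Tr}\,\mathfrak{F})_{\fp}\cong \Tr(\mathfrak{F}_{\fp})$ established at the beginning of Section~3, Theorem~\ref{B5} (restriction to opens), Proposition~\ref{B23}, and the classical module fact that a finitely generated module $M$ over a Noetherian (local) ring is projective if and only if $\underline{\Tr M}=0$ \cite{A}.

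\emph{$(1)\Rightarrow(2)$.} If $\mathfrak{F}$ is locally free, each stalk $\mathfrak{F}_{\fp}$ is a free $\mathcal{O}_{\fp}$-module. Hence $\Tr(\mathfrak{F}_{\fp})$ is stably zero, i.e.\ free. Via the stalk diagram of Section~3, each stalk $(\overline{\Tr}\,\mathfrak{F})_{\fp}$ is therefore free.

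To upgrade this to a global statement, cover $X$ by affine opens $U_i=\Spec R_i$ on which $\mathfrak{F}\mid_{U_i}=\widetilde{M_i}$ with $M_i$ projective. By Theorem~\ref{B3}(2) and Theorem~\ref{B5},
\[
\underline{(\overline{\Tr}\,\mathfrak{F})\mid_{U_i}}\cong\underline{\widetilde{\Tr M_i}}=0 .
\]
Then $\overline{\Tr}\,\mathfrak{F}$ is a coherent sheaf that is locally stably trivial, and I read $\underline{\overline{\Tr}\,\mathfrak{F}}=0$ in the sense used throughout the paper, namely local stable vanishing (as in the proof of Proposition~\ref{B6}).

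\emph{$(2)\Rightarrow(3)$.} If $\overline{\Tr}\,\mathfrak{F}$ is stably zero, it is locally free. Consider the sequence \eqref{e3}:
\[
0\to\overline{\lambda}\,\mathfrak{F}\to(\oplus^{t_2}\mathcal{O}_X)^*\to\overline{\Tr}\,\mathfrak{F}\to0 .
\]
Since its cokernel is locally free, the sequence is locally split, so $\overline{\lambda}\,\mathfrak{F}$ is locally free. Dualizing \eqref{e2} and using Proposition~\ref{B6}, $\underline{\mathfrak{F}}\cong\underline{\overline{\Tr}\,\overline{\Tr}\,\mathfrak{F}}$. But $\overline{\Tr}$ of a locally free sheaf is stably zero by $(1)\Rightarrow(2)$, so $\mathfrak{F}$ is locally a direct summand of $\oplus^{t_1}\mathcal{O}_X$. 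More concretely, on each affine $U_i$ Proposition~\ref{B23} shows $\mathfrak{F}\mid_{U_i}$ is $\mathcal{O}_{U_i}$-projective.

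To obtain a global splitting $\oplus^{t_1}\mathcal{O}_X=\mathfrak{F}\oplus\mathfrak{h}$, consider the surjection $\varphi:\oplus^{t_1}\mathcal{O}_X\to\mathfrak{F}$. Its kernel is locally a direct summand, and the obstruction to splitting $\varphi$ lies in $\operatorname{Ext}^1(\mathfrak{F},\ker\varphi)$. This vanishes locally, because $\mathcal{E}xt^1(\mathfrak{F},-)=0$ for locally free $\mathfrak{F}$. \textbf{This globalization is the main obstacle:} $H^1(X,\mathcal{H}om(\mathfrak{F},\ker\varphi))$ need not vanish on a non-affine $X$. I would first handle the affine case cleanly via Proposition~\ref{B23} together with \cite[2.5.x]{H} (a finitely generated module is projective iff it is locally free). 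For general $X$ I would interpret $\mathcal{O}_X$-projectivity locally, i.e.\ require a cover on which $\mathfrak{F}$ is a summand of a free sheaf, as the paper's conventions suggest. If a genuine global splitting is required, I would restrict to the affine setting or impose $H^1=0$.

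\emph{$(3)\Rightarrow(1)$.} A direct summand of $\oplus^t\mathcal{O}_X$ has stalks that are direct summands of $\mathcal{O}_{\fp}^t$. These are projective, hence free, over the local ring $\mathcal{O}_{\fp}$. Since $\mathfrak{F}$ is coherent, freeness of the stalk at $\fp$ spreads to a neighbourhood of $\fp$ (a standard finitely presented argument). Therefore $\mathfrak{F}$ is locally free, which closes the cycle of implications.
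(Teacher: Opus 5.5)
\textbf{Where the proposal stops, and why no proof can go further.} You hedge at the globalization in $(2)\Rightarrow(3)$, and that step is exactly where the statement itself breaks. With $\mathcal{O}_X$-projective in the global sense of Definition~\ref{B22}, the implications $(1)\Rightarrow(3)$ and $(2)\Rightarrow(3)$ are false for non-affine $X$, so no proof of the literal statement can exist.

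Take $R=k[x,y,z]$, $\mathfrak{m}=(x,y,z)$, and the connected Noetherian scheme $X=\Spec R\setminus\{\mathfrak{m}\}$. Let $\mathcal{E}=\coker\bigl(\mathcal{O}_X\xrightarrow{\phi}\mathcal{O}_X^3\bigr)$ with $\phi=(x,y,z)^{T}$.
\begin{itemize}
\item Since $D(x),D(y),D(z)$ cover $X$, $\phi$ is locally split. Hence $\mathcal{E}$ is locally free of rank $2$, so (1) holds.
\item $\phi^*=(x\ y\ z)\colon\mathcal{O}_X^3\to\mathcal{O}_X$ is surjective, so $\overline{\Tr}\,\mathcal{E}=0$ on the nose, and (2) holds.
\item Suppose $\mathcal{E}\oplus\mathfrak{h}\cong\mathcal{O}_X^n$. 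Then $\Ext^1_{\mathcal{O}_X}(\mathcal{E},\mathcal{O}_X)$ is a summand of $H^1(X,\mathcal{O}_X)^n\cong H^2_{\mathfrak{m}}(R)^n=0$. So $0\to\mathcal{O}_X\to\mathcal{O}_X^3\to\mathcal{E}\to 0$ splits, which means $ax+by+cz=1$ for some $a,b,c\in\Gamma(X,\mathcal{O}_X)=R$. That is absurd, so (3) fails.
\end{itemize}
This is precisely your $\Ext^1(\mathfrak{F},\ker\varphi)$ obstruction being nonzero.

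\textbf{Comparison with the paper's proof.} The paper's proof of $(2)\Rightarrow(3)$ runs your local argument. It takes an affine cover $U_i=\Spec R_i$, uses Theorem~\ref{B3} and \cite[2.6]{A} to get $M_i$ projective, and writes $\oplus^{l}\mathcal{O}_X|_{U_i}\cong\mathfrak{F}|_{U_i}\oplus\widetilde{N_i}$. It then tries to globalize:
\begin{itemize}
\item it asserts $\widetilde{N_i}|_{U_i\cap U_j}\cong\widetilde{N_j}|_{U_i\cap U_j}$, which is a cancellation of $\mathfrak{F}$ with no cocycle condition;
\item it glues the $\widetilde{N_i}$ to some $\mathfrak{G}$ and reads off a global decomposition from the local ones.
\end{itemize}
Even with a genuinely global $\mathfrak{G}$ such as $\ker\varphi$, the local isomorphisms need not agree on overlaps. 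So that step yields only local projectivity, which is what your argument already gives; the example shows nothing more is possible.

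The correct statement is the one you propose: either $X$ affine with (3) global via Proposition~\ref{B23}, or general $X$ with (3) read locally. Your proof of that version is fine, and your stalkwise $(3)\Rightarrow(1)$ matches the paper's.

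\textbf{One correction.} Keep (2) local even in the affine case, and in $(1)\Rightarrow(2)$ choose the $U_i$ so that $\mathfrak{F}|_{U_i}$ is free, not merely projective. With stabilization by free sheaves, a projective module that is not stably free has a transpose that is not stably free. For a non-principal ideal $I$ of a Dedekind domain $R$, one gets $\Tr I\cong I^{-1}$ up to free summands. So $\widetilde{I}$ on $\Spec R$ satisfies (1) and (3) but not (2) read globally. The module fact you cite holds over local rings, or over arbitrary rings modulo projectives.
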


\begin{proof}
We may assume that $\mathfrak{F}$ is stable.

\item[\textbf{(1 $\Rightarrow$ 2)}]
Let $U$ be an arbitrary affine open subset.  
By the assumption together with \ref{B21} and \ref{B5}, we have
$\overline{\Tr}(\mathfrak{F}\mid_U)=0$ and so $\overline{\Tr}\,\mathfrak{F} = 0$.

\item[\textbf{(2 $\Rightarrow$ 3)}]
By assumption, $X$ can be covered by affine open subsets 
$U_i = \Spec R_i$ such that for each $i$ there exists a finitely generated 
$R_i$-module $M_i$ with
$\mathfrak{F}\mid_{U_i} \cong \widetilde{M_i}.$ By \ref{B3}, $\Tr M_i = 0$.  
Thus, \cite[2.6]{A} implies that each $M_i$ is projective.  
Therefore, there exist an $\mathcal{O}(U_i)$-module $N_i$ and an integer $l_i$ such that
$\oplus^{l_i} \mathcal{O}(U_i) = M_i \oplus N_i.$
Taking $l := \max\{l_i\}$ and replacing the $N_i$ by suitable modules, we may assume that
$\oplus^l\mathcal{O}(U_i) = M_i \oplus N_i,$ for all $i.$
Applying the sheafification functor $\widetilde{(-)}$, we obtain
$\oplus^ l\mathcal{O}_X\mid_{U_i}= \mathfrak{F}\mid_{U_i} \oplus \widetilde{N_i}.$
Thus $\mathfrak{F}\mid_{U_i}$ is $\mathcal{O}_X\mid_{U_i}$-projective.  
Set $\mathfrak{G}_i := \widetilde{N_i}$.  
For $i \neq j$ we have
$\oplus ^l\mathcal{O}_X\mid_{U_i \cap U_j}
= \mathfrak{F}\mid_{U_i \cap U_j} \oplus \mathfrak{G}_i\mid_{U_i \cap U_j}
= \mathfrak{F}\mid_{U_i \cap U_j} \oplus \mathfrak{G}_j\mid_{U_i \cap U_j},$
so
$\mathfrak{G}_i\mid_{U_i \cap U_j} \cong \mathfrak{G}_j\mid_{U_i \cap U_j}.$
By the gluing lemma for sheaves, there exists a unique sheaf $\mathfrak{G}$ such that
$\mathfrak{G}\mid_{U_i} = \mathfrak{G}_i$ for all $i.$
Hence,
$\oplus^ l\mathcal{O}_X\mid_{U_i}
= \mathfrak{F}\mid_{U_i} \oplus \mathfrak{G}\mid_{U_i}$
This shows the result.

\item[\textbf{(3 $\Rightarrow$ 1)}]
Let $\fp$ be a point of $X$, and assume that $\mathfrak{F}$ is $\mathcal{O}_X$-projective.  
Then there exist an affine open subset $U \subseteq X$ containing $\fp$ and an 
$\mathcal{O}(U)$-module $M$ such that
$\mathfrak{F}\mid_U \cong \widetilde{M}.$
By assumption, there exist an $\mathcal{O}_X$-module $\mathfrak{h}$ and an integer $l$ such that
$\oplus^ l \mathcal{O}_X= \mathfrak{F} \oplus \mathfrak{h}.$
Together with Proposition~\ref{B23}, this implies that $M_\fp$ is a free module.  
The claim now follows from \cite[Exercise 2.5.3]{H}.
\end{proof}

\section{linkage of sheaves of modules}

In this section, we first introduce the concept of the linkage of sheaves of modules and
study some of their basic properties. Then, using these properties, we show that the sheaf
obtained from gluing schemes and gluing linked sheaves of modules is itself linked.

\begin{defn}\label{B.2}
Let $\mathfrak{F}$ and $\mathfrak{G}$ be two sheaves.
We say that $\mathfrak{F}$ and $\mathfrak{G}$ are linked, denoted by
$\mathfrak{F} \sim \mathfrak{G}$, if
$\mathfrak{F} \cong \overline{\lambda}\,\mathfrak{G}$ and 
$\mathfrak{G} \cong \overline{\lambda}\,\mathfrak{F}$.
A sheaf $\mathfrak{F}$ is called a linked sheaf if there exists a
sheaf 
$\mathfrak{G}$ such that
$\mathfrak{F} \sim \mathfrak{G}$. In particular,
$(\overline{\lambda})^{2}\mathfrak{F} \cong \mathfrak{F}$.
\end{defn}

In view of the definition, if $\mathfrak{F}$ is a linked sheaf then
$\overline{\lambda}\,\mathfrak{F}$ is also linked. Indeed,
$\overline{\lambda}\,\mathfrak{F} \sim \mathfrak{F}$.
Moreover, if $\mathfrak{F}$ is a coherent sheaf, then
$\overline{\lambda}\,\mathfrak{F}$ is a linked coherent sheaf.

The following theorem considers the case where $X=\Spec(R)$ is affine and shows that
linkedness of an $R$-module is equivalent to the linkedness of its associated sheaf.

\begin{thm}\label{C1}
Let $X=\Spec(R)$ be an affine scheme and let $M$ and $N$ be $R$-modules.  
The following statements are equivalent:
\begin{enumerate}
\item $M \sim N$.
\item $\widetilde{M} \sim \widetilde{N}$.
\item $\Gamma(U,\widetilde{M}) \sim \Gamma(U,\widetilde{N})$ for every open subset $U\subseteq X$.
\item $\Gamma(X,\widetilde{M}) \sim \Gamma(X,\widetilde{N})$.
\end{enumerate}
\end{thm}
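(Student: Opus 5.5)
The plan is to prove the cycle (1)$\Rightarrow$(2)$\Rightarrow$(3)$\Rightarrow$(4)$\Rightarrow$(1). Everything will be transported along the equivalence between finitely generated $R$-modules and coherent sheaves on $X=\Spec R$, given by $M\mapsto\widetilde{M}$ with inverse $\Gamma(X,-)$. The bridge between $\lambda$ and $\overline{\lambda}$ is Theorem~\ref{B3}(1) and (3). All isomorphisms involving $\lambda$ and $\overline{\lambda}$ are understood in the stable category, as in Lemma~\ref{RR}. By Lemma~\ref{B} we may pass freely between stability of $M$ and stability of $\widetilde{M}$.

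For (1)$\Rightarrow$(2): assume $M\cong\lambda N$ and $N\cong\lambda M$. Apply the tilde functor to get $\widetilde{M}\cong\widetilde{\lambda N}$. By Theorem~\ref{B3}(1), $\underline{\widetilde{\lambda N}}\cong\underline{\overline{\lambda}\,\widetilde{N}}$, so $\widetilde{M}\cong\overline{\lambda}\,\widetilde{N}$. The symmetric argument gives $\widetilde{N}\cong\overline{\lambda}\,\widetilde{M}$, hence $\widetilde{M}\sim\widetilde{N}$.

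For (2)$\Rightarrow$(3): let $U\subseteq X$ be open. Theorem~\ref{B5} gives $\overline{\lambda}(\widetilde{M}\mid_U)\cong(\overline{\lambda}\widetilde{M})\mid_U\cong\widetilde{N}\mid_U$, and symmetrically, so $\widetilde{M}\mid_U\sim\widetilde{N}\mid_U$. When $U$ is affine, say $U=\Spec R_f$ or more generally $U=\Spec A$, we have $\Gamma(U,\widetilde{M})=M\otimes_R A$ with $\widetilde{M}\mid_U$ its associated sheaf. Applying Theorem~\ref{B3}(3) on $U$ then turns the sheaf linkage into $\Gamma(U,\widetilde{M})\sim\Gamma(U,\widetilde{N})$.

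For a general, non-affine $U$, the module $\Gamma(U,\widetilde{M})$ need not be finitely generated. Here I would interpret ``linked'' via the same formalism, or reduce to a finite affine cover of $U$ using the sheaf property. \textbf{This is the main obstacle.} The statement for non-affine $U$ seems to require either restricting to affine $U$ or an extra argument that $\Gamma(U,-)$ commutes with $\lambda$. I would first try to settle it by using the exactness of the sequences defining $\overline{\lambda}$ together with left exactness of $\Gamma(U,-)$. I expect this to succeed only up to an $H^1$ term, and that term needs care.

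For (3)$\Rightarrow$(4): take $U=X$. For (4)$\Rightarrow$(1): note that $\Gamma(X,\widetilde{M})=M$ by \cite[Prop.~2.5.1]{H}, and likewise for $N$, so the condition in (4) is literally $M\sim N$.
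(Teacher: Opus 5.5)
The real gap is (2)$\Rightarrow$(3), and no argument can close it: statement (3) is false as written. The paper's one-line proof (``immediate from Theorems~\ref{B5} and~\ref{B3}'') has the same hole.

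\textbf{Why the restriction step fails.} Theorem~\ref{B5} only identifies $\overline{\lambda}(\widetilde N\mid_U)$ with $(\overline{\lambda}\widetilde N)\mid_U$ up to free summands. Restriction can destroy stability, so already your step ``hence $\widetilde M\mid_U\sim\widetilde N\mid_U$'' fails. For instance, take $R=k[x,y]_{(x,y)}/(xy)$. Then $(R/(y))^+=(0:_Ry)=(x)$, so $\lambda(R/(y))\cong R/(x)$, and symmetrically; thus $R/(y)\sim R/(x)$. On the affine open $U=D(x)=\Spec k(x)$, however, the sections are $k(x)$ and $0$, and $k(x)\not\cong\lambda(0)=0$.

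\textbf{Non-affine $U$.} Here (3) fails even up to stable isomorphism, through exactly the $H^1$ term you anticipated. Take $R=k[[x,y,z]]$, $M=\mathfrak m$ and $N:=\lambda\mathfrak m\cong\Omega^2_Rk$; this uses $\mathfrak m^+\cong R$ and self-duality of the Koszul complex. A direct computation gives $\lambda N\cong\mathfrak m$, so $M\sim N$. Let $U$ be the punctured spectrum. Then $\mathcal O(U)=R$ and $\widetilde{\mathfrak m}\mid_U\cong\mathcal O_U$, so $\Gamma(U,\widetilde M)=R$. Since $\operatorname{depth}N=2$, we get $\Gamma(U,\widetilde N)=N$. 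As $\lambda R=0$ and $N$ is not free, not even $\underline{N}\cong\underline{\lambda R}$ holds. Concretely, apply $\Gamma(U,-)$ to $0\to(\widetilde N)^*\to\mathcal O_X^3\to\widetilde{\mathfrak m}\to0$. This gives $R^3\to R\to H^1(U,(\widetilde N)^*)\to0$, where the image of $R^3$ is $\lambda N\cong\mathfrak m$ and the cokernel $H^1(U,(\widetilde N)^*)\cong k$ is nonzero.

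\textbf{What your argument does prove.} It gives (3) for affine $U$ with ``linked'' read up to stable isomorphism, by flat base change of $\Hom_R(-,R)$ on finitely presented modules. This is also why you should not declare all isomorphisms stable at the outset. Under that convention every free module would be linked to $0$, contradicting Corollary~\ref{C5}(ii) and Example~\ref{A.3}.

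Your (1)$\Leftrightarrow$(2)$\Leftrightarrow$(4) is the paper's argument (Theorem~\ref{B3} together with $\Gamma(X,\widetilde M)=M$). It is fine once you justify passing from $\underline{\widetilde{\lambda N}}\cong\underline{\overline{\lambda}\,\widetilde N}$ to a genuine isomorphism. The paper does this by invoking stability of linked modules \cite{MS}. More directly, $\widetilde{(-)}$ commutes with $\Hom_R(-,R)$ for finitely presented modules, so sheafifying a minimal presentation of $N$ gives $\overline{\lambda}\,\widetilde N=\widetilde{\lambda N}$ on the nose.
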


\begin{proof}
First, note that, by \cite[page 593, Proposition 3]{MS}, a linked module is stable. The results imediately follow from \ref{B5} and \ref{B3}.



\end{proof}
\begin{cor}\label{D}
Let $X$ is an affine scheme and $\mathfrak{F}$ and $\mathfrak{G}$ be coherent sheaves.
Then
\[
\mathfrak{F} \sim \mathfrak{G}
\quad\Longleftrightarrow\quad
\Gamma(X,\mathfrak{F}) \sim \Gamma(X,\mathfrak{G}).
\]
\end{cor}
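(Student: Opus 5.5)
The plan is to reduce the statement to Theorem \ref{C1}. Since $X=\Spec(R)$ is affine and $\mathfrak{F}$, $\mathfrak{G}$ are coherent, Hartshorne's correspondence gives finitely generated $R$-modules $M:=\Gamma(X,\mathfrak{F})$ and $N:=\Gamma(X,\mathfrak{G})$ together with isomorphisms $\mathfrak{F}\cong\widetilde{M}$ and $\mathfrak{G}\cong\widetilde{N}$. The relation $\sim$ of Definition \ref{B.2} is stated only through isomorphisms $\mathfrak{F}\cong\overline{\lambda}\,\mathfrak{G}$ and $\mathfrak{G}\cong\overline{\lambda}\,\mathfrak{F}$. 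Hence it is invariant under replacing $\mathfrak{F}$, $\mathfrak{G}$ by isomorphic sheaves, provided $\overline{\lambda}$ carries isomorphic sheaves to isomorphic sheaves. This holds, at least up to the stabilization built into the definition, by Lemma \ref{RR}. So $\mathfrak{F}\sim\mathfrak{G}$ if and only if $\widetilde{M}\sim\widetilde{N}$.

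Next I apply the equivalence (2)$\Leftrightarrow$(4) of Theorem \ref{C1}. It says that $\widetilde{M}\sim\widetilde{N}$ if and only if $\Gamma(X,\widetilde{M})\sim\Gamma(X,\widetilde{N})$. Since $\Gamma(X,\widetilde{M})=M=\Gamma(X,\mathfrak{F})$, and likewise for $N$, this is exactly the claimed equivalence. Alternatively one can pass through (1) of Theorem \ref{C1}, namely $M\sim N$, which is the same condition.

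The main obstacle is that $\overline{\lambda}$ is defined only up to stabilization, while $\sim$ is phrased with genuine isomorphisms. To handle this I would use that linked modules are stable (\cite{MS}, as in the proof of Theorem \ref{C1}), and that stability passes between $M$ and $\widetilde{M}$ by Lemma \ref{B}. Under stability I expect the minimal presentation to make $\overline{\lambda}\widetilde{M}\cong\widetilde{\lambda M}$ an honest isomorphism, via part (1) of Theorem \ref{B3}. If that fails, I would read $\sim$ throughout as a statement about stable isomorphism classes, which is consistent with the earlier results.
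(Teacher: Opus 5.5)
Your proposal is correct and is essentially the argument the paper intends. The paper gives no separate proof of this corollary; it treats it as an immediate consequence of Theorem \ref{C1}, using $\mathfrak{F}\cong\widetilde{\Gamma(X,\mathfrak{F})}$ and $\mathfrak{G}\cong\widetilde{\Gamma(X,\mathfrak{G})}$ for coherent sheaves on an affine Noetherian scheme together with the equivalence (2)$\Leftrightarrow$(4).

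Your worry about stabilization is simpler than you make it. If you compute $\overline{\lambda}\,\widetilde{M}$ from the tilde of a presentation of $M$, then exactness of $\widetilde{(-)}$ and $\mathcal{H}om_{\mathcal{O}_X}(\widetilde{M},\mathcal{O}_X)\cong\widetilde{M^+}$ already give an honest isomorphism $\overline{\lambda}\,\widetilde{M}\cong\widetilde{\lambda M}$. So stability is needed only for independence of the chosen presentation, as in the paper's own treatment.
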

One may obtain the following examples.
\medskip
\begin{exam}
\begin{itemize}
\item[( i )]
Let $R = k[x,y]/(xy)$ where $k$ is a field, and let $M = (x)$ be the ideal generated by $x$. Then $M$ is not a free $R$–module, but it sits in a short exact sequence
\[
0 \longrightarrow R \xrightarrow{\cdot y} R \longrightarrow M \longrightarrow 0.
\]
Passing to the associated sheaf $\mathfrak{F} = \widetilde{M}$ on $X = \Spec R$, we obtain a coherent $\mathcal{O}_X$–module with a free resolution of length one. The corresponding sequence \((2.1)\) gives
\[
0 \longrightarrow \mathfrak{F}^\ast \longrightarrow \mathcal{O}_X 
\xrightarrow{\cdot y} \mathcal{O}_X \longrightarrow \overline{\Tr} \mathfrak{F} \longrightarrow 0.
\]
Thus, $\overline{\Tr}\mathfrak{F} \cong \mathcal{O}_X/(y)$ and $\overline{\lambda} \mathfrak{F} \cong \mathcal{O}_X/(x)$. Therefore, $\mathfrak{F}$ and $\overline{\lambda} \mathfrak{F}$ are linked, corresponding to the classical linkage of the ideals $(x)$ and $(y)$ in $R$.

\item[( ii )]
Let $X = \Spec R$ where $R = k[x,y,z]/(xz, yz)$. Consider the $R$–module $M = (x,y)$. Then the associated sheaf $\mathfrak{F} = \widetilde{M}$ has the minimal free resolution
\[
0 \longrightarrow R^2 
\xrightarrow{
\begin{pmatrix}
z & 0 \\
0 & z
\end{pmatrix}}
R^2 \longrightarrow M \longrightarrow 0.
\]
From this resolution, we obtain
\[
\overline{\Tr} \mathfrak{F} \cong \widetilde{R/(z)}, \qquad 
\overline{\lambda} \mathfrak{F} \cong \widetilde{R/(x,y)}.
\]
Thus, $\mathfrak{F}$ and $\overline{\lambda} \mathfrak{F}$ are linked. This demonstrates that linkage of sheaves reflects geometric relationships between components of the scheme defined by intersecting subvarieties.

\end{itemize}
\end{exam}




To study linked sheaves more effectively,
we introduce the following notion.

\begin{defn}
A sheaf of $\mathcal{O}_X$-modules $\mathfrak{F}$ is called a \emph{syzygy} if it can be embedded
into a free sheaf of finite rank.
\end{defn}
\begin{cor}\label{C5}
Let $\mathfrak{F}$ be a sheaf of modules. Then the following statements hold:
\begin{itemize}
\item[ (i) ] If $\mathfrak{F}$ be a linked sheaf of modules then $\mathfrak{F}$ is a syzygy. In particular, for every open subset $U\subseteq X$,
the module $\mathfrak{F}(U)$ is a syzygy and therefore torsionless. 
\item[ (ii) ] Moreover, in the case where $X$ is an affine scheme, $\mathfrak{F}$ is linked if and only if $\mathfrak{F}$ is syzygy and stable.
\end{itemize}
\end{cor}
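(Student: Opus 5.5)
For (i), I will take a linked sheaf $\mathfrak{F}$ and a sheaf $\mathfrak{G}$ with $\mathfrak{F}\sim\mathfrak{G}$, so that $\mathfrak{F}\cong\overline{\lambda}\,\mathfrak{G}$. I then use the exact sequence \eqref{e3} of Definition~\ref{B1}, applied to $\mathfrak{G}$:
\[
0 \longrightarrow \overline{\lambda}\,\mathfrak{G} \overset{\phi^*}{\longrightarrow} (\overset{t_2}{\oplus}\mathcal{O}_X)^* \longrightarrow \overline{\Tr}\,\mathfrak{G}\longrightarrow 0 .
\]
Since $(\overset{t_2}{\oplus}\mathcal{O}_X)^*\cong\overset{t_2}{\oplus}\mathcal{O}_X$ is free of finite rank, composing the isomorphism $\mathfrak{F}\cong\overline{\lambda}\,\mathfrak{G}$ with $\phi^*$ embeds $\mathfrak{F}$ into a free sheaf of finite rank. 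Hence $\mathfrak{F}$ is a syzygy.

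For the statement about sections, I will use that $\Gamma(U,-)$ is left exact. Applying it to this monomorphism gives an injection $\mathfrak{F}(U)\hookrightarrow \mathcal{O}_X(U)^{t_2}$, so $\mathfrak{F}(U)$ is a submodule of a free $\mathcal{O}_X(U)$-module. Such a module is torsionless, since the coordinate projections separate points.

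For (ii), let $X=\Spec R$. By Corollary~\ref{D} (via Theorem~\ref{C1}), $\mathfrak{F}$ is linked if and only if $M:=\Gamma(X,\mathfrak{F})$ is a linked $R$-module. By the last part of Lemma~\ref{B}, $\mathfrak{F}$ is stable if and only if $M$ is stable. I also need that $\mathfrak{F}$ is a syzygy if and only if $M$ is a submodule of a free module:
\begin{itemize}
\item[$\Rightarrow$] If $\mathfrak{F}$ is a syzygy, take global sections of the embedding, as in (i).
\item[$\Leftarrow$] If $M$ is a submodule of a free module, apply the exact functor $\widetilde{(-)}$.
\end{itemize}
This reduces (ii) to the module-theoretic statement of Martsinkovsky--Strooker \cite{MS}: a finitely generated module $M$ is linked (i.e.\ $M\cong\lambda^2 M$) if and only if $M$ is stable and a syzygy (equivalently, torsionless).

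The forward direction of (ii) is already (i) together with the fact, quoted in the proof of Theorem~\ref{C1}, that linked modules are stable. The main obstacle is the converse. There I will invoke \cite[Prop.~3]{MS}, which states that a stable torsionless module satisfies $\lambda^2M\cong M$, so that $M\sim\lambda M$. If a self-contained argument is wanted instead, I would realise $\lambda^2 M$ as the image of $M^{++}$ in the free module and compare it with $M$ via the natural map $M\to M^{++}$, which is injective because $M$ is torsionless. Stability is then needed to exclude free summands that $\lambda$ kills.

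Finally, I transport the conclusion back to sheaves with Theorem~\ref{B3}(1). Since it only gives isomorphisms up to stabilization, I will note that both sides are stable, so the stable isomorphism can be upgraded to an honest isomorphism, just as in the proof of Theorem~\ref{C1}.
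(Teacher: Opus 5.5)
Your proposal is correct and is essentially the argument the paper has in mind; the corollary is stated there without a written proof. Part (i) is the embedding $\overline{\lambda}\,\mathfrak{G}\hookrightarrow(\overset{t_2}{\oplus}\mathcal{O}_X)^*$ coming from Definition~\ref{B1}, and part (ii) is Corollary~\ref{D} together with Lemma~\ref{B} and the Martsinkovsky--Strooker characterization of linked modules as stable syzygies.

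Your final transport step can be dropped. Applying Corollary~\ref{D} to $\widetilde{M}$ and $(\lambda M)^{\sim}$ already turns $M\sim\lambda M$ into $\mathfrak{F}\sim(\lambda M)^{\sim}$. This also spares you the cancellation of free summands that upgrading a stable isomorphism to an isomorphism would need, which is automatic over local rings but not in general.
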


The next result examine whether properties of the ring affect the linkedness of a module
and the linkedness of a sheaf of modules.

\begin{exam}\label{A.3}
Let $R$ be a PID and let $X=\Spec(R)$.  
Then there exists no linked coherent sheaf of $\mathcal{O}_X$-modules.  
Indeed, if $\mathfrak{F}$ were linked, then by \ref{C5}, $\mathfrak{F}(U)$ would be a syzygy
for each $U\subseteq X$, hence a free module.  
Therefore $\mathfrak{F}(U)$ cannot be a linked module, contradicting Theorem~\ref{C1}.
\end{exam}
 The next theorem shows that linkedness of a sheaf is a local property.

\begin{thm}\label{C2.3}
Let $\mathfrak{F}$ be a sheaf of $\mathcal{O}_X$-modules. Then the following statements hold.
\begin{enumerate}
\item[(1)] 
$\mathfrak{F}$ is linked if and only if $\mathfrak{F}\!\mid_U$ is linked as an 
$\mathcal{O}_X\!\mid_U$-module for every open subset $U\subseteq X$.

\item[(2)]
If $\mathfrak{F}$ is a coherent sheaf, then $\mathfrak{F}$ is linked if and only if 
$\mathfrak{F}\!\mid_U$ is linked for every open affine subset $U\subseteq X$.
\end{enumerate}
\end{thm}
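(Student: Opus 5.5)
For (1), the forward direction uses Theorem \ref{B5}. Suppose $\mathfrak{F}\sim\mathfrak{G}$, so $\mathfrak{F}\cong\overline{\lambda}\,\mathfrak{G}$ and $\mathfrak{G}\cong\overline{\lambda}\,\mathfrak{F}$. Restricting to an open $U$ and applying Theorem \ref{B5} gives
\[
\mathfrak{F}\mid_U\cong(\overline{\lambda}\,\mathfrak{G})\mid_U\cong\overline{\lambda}(\mathfrak{G}\mid_U)
\quad\text{and}\quad
\mathfrak{G}\mid_U\cong\overline{\lambda}(\mathfrak{F}\mid_U).
\]
Hence $\mathfrak{F}\mid_U\sim\mathfrak{G}\mid_U$. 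Two points need care here. First, Theorem \ref{B5} only holds up to stable isomorphism, so the free summands must be removed. A linked sheaf is a syzygy by Corollary \ref{C5}, and it is stable, since linked modules are stable \cite{MS}; this passes to stalks. With stability on both sides, stable isomorphism can be upgraded to isomorphism by cancelling free summands stalkwise. Second, the ranks $(t_1,t_2)$ are preserved under restriction because $X$ is connected, as remarked before Theorem \ref{B5}. The converse direction of (1) is trivial: take $U=X$.

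For (2), the forward direction is the special case of (1) for affine $U$. For the converse, I would cover $X$ by affine opens $U_i=\Spec R_i$ with $\mathfrak{F}\mid_{U_i}\cong\widetilde{M_i}$. By hypothesis each $\mathfrak{F}\mid_{U_i}$ is linked. By Corollary \ref{D}, $M_i$ is a linked $R_i$-module, so $M_i\cong\lambda^2M_i$. The natural candidate partner is $\mathfrak{G}:=\overline{\lambda}\,\mathfrak{F}$, which is coherent by Remark \ref{B21}. It then suffices to show $\overline{\lambda}\,\mathfrak{G}=\overline{\lambda}^2\mathfrak{F}\cong\mathfrak{F}$. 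On each $U_i$, Theorem \ref{B5} together with Theorem \ref{B3} gives
\[
(\overline{\lambda}^2\mathfrak{F})\mid_{U_i}\cong\overline{\lambda}^2(\mathfrak{F}\mid_{U_i})\cong\widetilde{\lambda^2M_i}\cong\mathfrak{F}\mid_{U_i}.
\]

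The main obstacle is gluing these local isomorphisms into a global one. I would avoid ad hoc gluing by working with a canonical comparison map. The exact sequences \eqref{e2} and \eqref{e3} give a natural morphism $\mathfrak{F}\to\mathfrak{F}^{**}$. Since $\overline{\lambda}^2\mathfrak{F}$ is the image of $\mathfrak{F}$ in a free sheaf, determined by a dualized presentation, there is a natural map $\mathfrak{F}\to\overline{\lambda}^2\mathfrak{F}$ built from the global presentation. Its restriction to each $U_i$ is the corresponding module map $M_i\to\lambda^2M_i$, which is an isomorphism because $M_i$ is linked (up to the stable-isomorphism subtlety handled as in (1)). A morphism of sheaves that is an isomorphism on every member of an open cover is an isomorphism. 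Hence $\mathfrak{F}\cong\overline{\lambda}\,\mathfrak{G}$ and $\mathfrak{G}=\overline{\lambda}\,\mathfrak{F}$, so $\mathfrak{F}\sim\mathfrak{G}$.

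The delicate point is checking that the local identification $(\overline{\lambda}^2\mathfrak{F})\mid_{U_i}\cong\widetilde{\lambda^2M_i}$ is compatible with this natural map. That compatibility is what uses the choice of a single global presentation of $\mathfrak{F}$, so that all restrictions come from the same resolution.
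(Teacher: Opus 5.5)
Part (1) follows the paper's argument, and your converse via $U=X$ is cleaner than the paper's ``apply locally and extend''. Your patch for the stable-isomorphism issue, however, is wrong on two counts. First, stability of a linked sheaf does not pass to stalks: for $R=k[x,y]/(xy)$ the sheaf $\widetilde{(x)}$ is linked, yet at $\fp=(y)$ the element $x$ is a unit, $R_\fp\cong k(x)$ is a field, and the stalk is free. Second, stalkwise isomorphisms do not yield an isomorphism of sheaves; a nontrivial line bundle has the same stalks as $\mathcal{O}_X$. No patch is needed. Compute $\overline{\lambda}$ on $U$ from the restriction of the global presentation. Restriction to an open set is exact and $\mathcal{H}om_{\mathcal{O}_X}(\mathfrak{G},\mathcal{O}_X)\mid_U=\mathcal{H}om_{\mathcal{O}_X\mid_U}(\mathfrak{G}\mid_U,\mathcal{O}_X\mid_U)$, so $(\overline{\lambda}\mathfrak{G})\mid_U=\overline{\lambda}(\mathfrak{G}\mid_U)$ on the nose.

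For the converse of (2) your route genuinely differs from the paper's. The paper produces local isomorphisms $\mathfrak{F}\mid_U\cong(\overline{\lambda}^2\mathfrak{F})\mid_U$ and invokes the gluing lemma. But local isomorphisms glue only if they agree on overlaps, and this is never checked. Your canonical map is exactly what repairs this, and it can be made explicit:
\begin{itemize}
\item Put $L=\overset{t_1}{\oplus}\mathcal{O}_X$. Present $\overline{\lambda}\mathfrak{F}$ by $G_1\xrightarrow{h}L^*\to\overline{\lambda}\mathfrak{F}\to 0$ with $h=\varphi^*\circ q$, for a surjection $q\colon G_1\to\mathfrak{F}^*$ from a finite free sheaf.
\item Left exactness of $(-)^*$ gives $\overline{\lambda}^2\mathfrak{F}\cong\operatorname{im}h^*$. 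Here $h^*=q^*\circ\varphi^{**}$, with $q^*$ injective and $\varphi^{**}\circ\theta_L=\theta_{\mathfrak{F}}\circ\varphi$, where $\theta$ is the evaluation map to the double dual and $\theta_L$ is an isomorphism.
\item Since $\varphi$ is onto, $\overline{\lambda}^2\mathfrak{F}\cong\theta_{\mathfrak{F}}(\mathfrak{F})$. So your natural map is the corestriction of $\theta_{\mathfrak{F}}$. It is always surjective, compatible with restriction (this is the compatibility you called delicate), and injective exactly where $\theta_{\mathfrak{F}}$ is.
\item On each $U_i$ the sheaf $\mathfrak{F}\mid_{U_i}$ is linked, hence a syzygy by Corollary~\ref{C5}. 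Therefore $\theta$ is injective there, by naturality of $\theta$ and its bijectivity on free sheaves.
\end{itemize}
So the map is an isomorphism on every $U_i$ and thus globally. No stability or cancellation argument is needed, so drop the hedge pointing back to (1).

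The paper's version is shorter, but its gluing step is unjustified. Yours is an actual proof, and it reduces linkedness to injectivity of $\mathfrak{F}\to\mathfrak{F}^{**}$, a visibly local condition. Its one global input is a finite free sheaf mapping onto $\mathfrak{F}^*$. That is the same kind of global presentation the paper tacitly needs just to define $\overline{\lambda}^2\mathfrak{F}$.
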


\begin{proof}
(1)  
Assume that $\mathfrak{F}$ is linked and let $U\subseteq X$ be open.  
Since $\mathfrak{F}\cong \overline{\lambda}^{\,2}\mathfrak{F}$, Theorem~\ref{B5} gives
\[
\mathfrak{F}\!\mid_U
\cong (\overline{\lambda}^{\,2}\mathfrak{F})\!\mid_U
\cong \overline{\lambda}\bigl((\overline{\lambda}\mathfrak{F})\!\mid_U\bigr)
\cong \overline{\lambda}\bigl(\overline{\lambda}(\mathfrak{F}\!\mid_U)\bigr),
\]
hence $\mathfrak{F}\!\mid_U$ is linked.  
The converse follows by the same argument applied locally and then extended to $X$.

(2)  
Assume that $\mathfrak{F}\!\mid_U$ is linked for every open affine subset $U\subseteq X$.  
For such $U$, we have
$\mathfrak{F}\!\mid_U
\cong (\overline{\lambda}(\overline{\lambda}\mathfrak{F}))\!\mid_U.$
Now apply gluing Lemma to conclude that  
$\mathfrak{F}\cong \overline{\lambda}(\overline{\lambda}\mathfrak{F})$,  
so $\mathfrak{F}$ is linked.
\end{proof}




Is a sheaf obtained by glueing linked sheaves again linked? Two next theorems show that this is true in some cases. First,
we introduce the following notion.

\begin{defn}
Let $\mathfrak{F}$ and $\mathfrak{G}$ be coherent sheaves having the same $(t_1,t_2)$-ranks.  
We say that $\mathfrak{F}$ and $\mathfrak{G}$ are \emph{$(t_1,t_2)$-co-rank}.
\end{defn}

\begin{thm}\label{C4}
Let $\{U_i\}$ be a family of open affine subsets $U_i\subseteq X$, and let $t,k$ be integers.  
Assume that $\{(U_i,\mathfrak{F}_i)\}$ is a family of $(t,k)$-co-rank linked coherent sheaves satisfying the glueing lemma.  
Then there exists a unique linked sheaf $\mathfrak{F}$ on $X$ such that 
$\mathfrak{F}\!\mid_{U_i}\cong \mathfrak{F}_i$ for each $i$.
\end{thm}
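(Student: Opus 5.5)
Glue first, then show the glued sheaf is linked. Since the family $\{(U_i,\mathfrak{F}_i)\}$ satisfies the gluing hypotheses (isomorphisms $\mathfrak{F}_i\mid_{U_i\cap U_j}\cong\mathfrak{F}_j\mid_{U_i\cap U_j}$ satisfying the cocycle condition), the gluing lemma \cite[Exercise 2.1.22]{H} produces a sheaf $\mathfrak{F}$ on $X$ (implicitly $X=\bigcup U_i$), unique up to isomorphism, with $\mathfrak{F}\mid_{U_i}\cong\mathfrak{F}_i$. Coherence is local, so $\mathfrak{F}$ is coherent. The remaining claim is that $\mathfrak{F}$ is linked. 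By Theorem~\ref{C2.3}(2), or rather by the argument in its proof, this reduces to producing a global isomorphism $\mathfrak{F}\cong\overline{\lambda}^{\,2}\mathfrak{F}$ from the local isomorphisms $\mathfrak{F}_i\cong\overline{\lambda}^{\,2}\mathfrak{F}_i$.

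\textbf{Step 1: a global presentation.} This is where the $(t,k)$-co-rank hypothesis is used. Each $\mathfrak{F}_i$ has a presentation
\[
\overset{k}{\oplus}\mathcal{O}_{U_i}\xrightarrow{\phi_i}\overset{t}{\oplus}\mathcal{O}_{U_i}\xrightarrow{\varphi_i}\mathfrak{F}_i\to 0 .
\]
I want to glue these to a presentation $\overset{k}{\oplus}\mathcal{O}_X\to\overset{t}{\oplus}\mathcal{O}_X\to\mathfrak{F}\to0$, so that $\overline{\lambda}\mathfrak{F}$ and $\overline{\Tr}\mathfrak{F}$ are defined on $X$. Since the free sheaves restrict to free sheaves of the same rank (as noted before Theorem~\ref{B5}), the only issue is whether the local maps agree on overlaps. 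On $U_i\cap U_j$ the two presentations differ by a change of free resolution. By the uniqueness-up-to-stabilization discussion at the start of Section 3, and because both have the same minimal ranks $(t,k)$, I would argue that they differ by automorphisms of the free modules. I then intend to modify the $\varphi_i$ so that they agree on overlaps. This is the main obstacle: in general such automorphisms form a nontrivial cocycle. To get around it, I would first try to use the fact that the presentation maps are determined by generating global sections (Remark~\ref{B21}(1)). Failing that, I would accept only a stable isomorphism. Because the rank is constant on the connected scheme $X$, stable isomorphism with fixed ranks $(t,k)$ gives equality of the free parts, and therefore genuine isomorphisms.

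\textbf{Step 2: compatibility of $\overline{\lambda}$ with restriction.} With a global presentation in hand, Theorem~\ref{B5} gives
\[
(\overline{\lambda}^{\,2}\mathfrak{F})\mid_{U_i}\cong\overline{\lambda}^{\,2}(\mathfrak{F}\mid_{U_i})\cong\overline{\lambda}^{\,2}\mathfrak{F}_i\cong\mathfrak{F}_i\cong\mathfrak{F}\mid_{U_i}.
\]
The stabilization ambiguity disappears here: each $\mathfrak{F}_i$ is linked, hence stable by Theorem~\ref{C1} and \cite{MS}, and so are the sheaves involved.

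\textbf{Step 3: gluing the isomorphisms.} Next I check that these local isomorphisms agree on $U_i\cap U_j$. They are built functorially from the same glued presentation, through the stalk diagrams in the proof of Theorem~\ref{B5}, so they are compatible with the gluing data. Morphisms of sheaves glue, which yields $\mathfrak{F}\cong\overline{\lambda}(\overline{\lambda}\mathfrak{F})$. Hence $\mathfrak{F}\sim\overline{\lambda}\mathfrak{F}$, so $\mathfrak{F}$ is linked.

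\textbf{Uniqueness.} This follows from the uniqueness in the gluing lemma.
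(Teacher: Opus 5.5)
Your outline is the paper's own proof: glue the sheaves, assert a global $(t,k)$-presentation of the glued sheaf, apply Theorem~\ref{B5} on each $U_i$, and glue the resulting isomorphisms. The paper handles the first step only by asserting that $\mathfrak{F}$ ``is coherent and has $(t,k)$-rank'', and the last step only by ``the definition of a sheaf''. You correctly place the difficulty in Step~1, but your fallback does not address it. Constancy of rank and stable isomorphism are statements about isomorphism classes of objects. What you need is a single map $\overset{t}{\oplus}\mathcal{O}_X\to\mathfrak{F}$ that restricts, up to change of basis, to the $\varphi_i$. Gluing the $\varphi_i$ modulo transition matrices $g_{ij}\in\mathrm{GL}_t(\mathcal{O}(U_i\cap U_j))$ gives at best a surjection from a locally free sheaf, and the glued sheaf need not be globally generated at all.

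Here is a counterexample. Take $A=k[x,y]/(xy)$, the connected scheme $X=\mathbb{P}^1_A$ with its standard cover $U_0,U_1\cong\Spec A[w]$, and set $\mathfrak{F}=\mathcal{O}_X(-1)/y\,\mathcal{O}_X(-1)$ and $\mathfrak{F}_i=\mathfrak{F}\mid_{U_i}\cong(A[w]/(y))^{\sim}$. Run the computation of the example following Corollary~\ref{D} with $A[w]$ in place of $k[x,y]/(xy)$. It shows that each $\mathfrak{F}_i$ is linked (to $(A[w]/(x))^{\sim}$) and has $(1,1)$-ranks. Yet $\Gamma(X,\mathfrak{F})=\Gamma(\mathbb{P}^1_{k[x]},\mathcal{O}(-1))=0$ although $\mathfrak{F}\neq 0$. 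So there is no surjection $\overset{t}{\oplus}\mathcal{O}_X\to\mathfrak{F}$ for any $t$, and $\overline{\lambda}\mathfrak{F}$ is not even defined in the sense of Definition~\ref{B1}. These gluing data differ from those of the linked sheaf $\mathcal{O}_X/y\,\mathcal{O}_X$ only by multiplication by the unit $s_1/s_0$ on $U_0\cap U_1$, which is exactly the cocycle you were worried about. Hence Step~1 cannot follow from the co-rank hypothesis, and the glued sheaf need not be linked. An extra assumption is required, for instance a global presentation restricting to the given local ones.

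Step~3 has a separate gap. The isomorphisms $\mathfrak{F}_i\cong\overline{\lambda}^{\,2}\mathfrak{F}_i$ come from the hypothesis that each $\mathfrak{F}_i$ is linked. They are arbitrary choices, not built functorially from the presentation. On $U_i\cap U_j$ they can differ by an automorphism of $\mathfrak{F}\mid_{U_i\cap U_j}$, so ``morphisms glue'' does not apply. You need a comparison map that is defined globally. Suppose you have a global surjection $\varphi\colon\mathcal{E}=\overset{t}{\oplus}\mathcal{O}_X\to\mathfrak{F}$, and you compute $\overline{\lambda}^{\,2}\mathfrak{F}$ via the surjection $\mathcal{E}^*\to\overline{\lambda}\mathfrak{F}$ (legitimate up to the free-summand ambiguity already built into Definition~\ref{B1}). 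Left exactness of $(-)^*$ gives $(\overline{\lambda}\mathfrak{F})^*=\ker(\varphi^{**})$, hence canonically $\overline{\lambda}^{\,2}\mathfrak{F}\cong\mathrm{im}(\mathfrak{F}\to\mathfrak{F}^{**})$. Linked sheaves are syzygies (Corollary~\ref{C5}), so the canonical surjection $\mathfrak{F}\to\overline{\lambda}^{\,2}\mathfrak{F}$ is injective on every $U_i$ and therefore an isomorphism. This makes Step~3 rigorous, but only once Step~1 is available.
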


\begin{proof}
By the glueing lemma, there exists a unique sheaf $\mathfrak{F}$ on $X$ such that  
$\mathfrak{F}\!\mid_{U_i}\cong \mathfrak{F}_i$ for all $i$.  
Since $\mathfrak{F}\!\mid_{U_i}\cong \widetilde{\Gamma(U_i,\mathfrak{F}_i)}$, the sheaf $\mathfrak{F}$ is coherent and has $(t,k)$-rank.  
Moreover, by the definition of linkedness and \ref{B5},
\[
\mathfrak{F}\!\mid_{U_i}
\cong \mathfrak{F}_i
\cong \overline{\lambda}(\overline{\lambda}\mathfrak{F}_i)
\cong \overline{\lambda}\bigl(\overline{\lambda}(\mathfrak{F}\!\mid_{U_i})\bigr)
\cong (\overline{\lambda}(\overline{\lambda}\mathfrak{F}))\!\mid_{U_i}.
\]
The definition of a sheaf yields $\mathfrak{F}\cong \overline{\lambda}(\overline{\lambda}\mathfrak{F})$,  
so $\mathfrak{F}$ is linked.
\end{proof}

\begin{thm}\label{C}
\textbf{Glueing of linked sheaves.}  
Let $(X,\mathfrak{F})$ be a sheaf obtained by glueing a family $\{(X_i,\mathfrak{F}_i)\}$  
under the glueing conditions for schemes and sheaves of modules.  
Also, assume that each $X_i$ is connected.  
Then $\mathfrak{F}$ is linked provided that there exist integers $t,k$ such that  
$\{(X_i,\mathfrak{F}_i)\}$ are $(t,k)$-co-rank linked sheaves.
\end{thm}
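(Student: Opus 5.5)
The plan is to reduce Theorem \ref{C} to the local criterion of Theorem \ref{C2.3} and the compatibility of $\overline{\lambda}$ with restriction in Theorem \ref{B5}, along the lines of the proof of Theorem \ref{C4}. Under the glueing conditions for schemes we regard each $X_i$ as an open subscheme of $X$, with $X=\bigcup_i X_i$, $\mathfrak{F}\!\mid_{X_i}\cong \mathfrak{F}_i$, and the transition isomorphisms $\varphi_{ij}\colon \mathfrak{F}_i\!\mid_{X_i\cap X_j}\to \mathfrak{F}_j\!\mid_{X_i\cap X_j}$ satisfying the cocycle condition.

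The first step is to show that $\mathfrak{F}$ is finitely presented with $(t,k)$-ranks, so that $\overline{\lambda}\mathfrak{F}$ is defined by a single presentation
$\overset{k}{\oplus}\mathcal{O}_X \rightarrow \overset{t}{\oplus}\mathcal{O}_X \rightarrow \mathfrak{F}\rightarrow 0$.
Since each $X_i$ is connected, the free sheaves $\overset{t}{\oplus}\mathcal{O}_{X_i}$ and $\overset{k}{\oplus}\mathcal{O}_{X_i}$ have constant rank on $X_i$. Their restrictions to every overlap $X_i\cap X_j$ therefore agree with $\overset{t}{\oplus}\mathcal{O}_{X_i\cap X_j}$ and $\overset{k}{\oplus}\mathcal{O}_{X_i\cap X_j}$, as noted before Theorem \ref{B5}. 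This co-rank hypothesis is what allows the local presentations to be compared on overlaps.

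The second step is to compute $\overline{\lambda}$ locally. By Theorem \ref{B5}, for each $i$,
\[
(\overline{\lambda}\mathfrak{F})\!\mid_{X_i}\cong \overline{\lambda}(\mathfrak{F}\!\mid_{X_i})\cong \overline{\lambda}\mathfrak{F}_i,
\]
and applying the same theorem again gives
\[
(\overline{\lambda}^{\,2}\mathfrak{F})\!\mid_{X_i}\cong \overline{\lambda}^{\,2}\mathfrak{F}_i\cong \mathfrak{F}_i\cong \mathfrak{F}\!\mid_{X_i},
\]
where the middle isomorphism holds because $\mathfrak{F}_i$ is linked. The third step is to glue these local isomorphisms into a global isomorphism $\mathfrak{F}\cong\overline{\lambda}^{\,2}\mathfrak{F}$, using the sheaf axiom exactly as at the end of the proof of Theorem \ref{C4}. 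Once this holds, $\mathfrak{F}\sim\overline{\lambda}\mathfrak{F}$, so $\mathfrak{F}$ is linked. Alternatively, one can refine $\{X_i\}$ to an affine cover and invoke Theorem \ref{C2.3}(2).

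The main obstacle is the gluing in the third step. The isomorphisms $\psi_i\colon(\overline{\lambda}^{\,2}\mathfrak{F})\!\mid_{X_i}\to\mathfrak{F}\!\mid_{X_i}$ must agree on overlaps, after composing with $\varphi_{ij}$, for them to glue. Theorem \ref{B5} provides isomorphisms only up to stabilization, and it does not guarantee that they are natural. My first approach is to build all the $\psi_i$ from one global presentation of $\mathfrak{F}$ with ranks $(t,k)$, constructed in the first step. Restricting that presentation to each $X_i$ and dualizing twice makes $\psi_i$ the restriction of a single map defined on all of $X$, which forces compatibility. If that fails, I would fall back on the local criterion of Theorem \ref{C2.3}(1) and argue stalkwise with the diagram preceding Theorem \ref{B5}.
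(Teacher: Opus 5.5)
Your outline matches the paper's. Theorem \ref{C} is deduced from Theorem \ref{C4}, and that proof consists of your three steps. The genuine gap is your first step, and it cannot be repaired under the stated hypotheses.

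\textbf{Step one is false.} Connectedness of the $X_i$ and the co-rank hypothesis only say that the local presentations $\overset{k}{\oplus}\mathcal{O}_{X_i}\to\overset{t}{\oplus}\mathcal{O}_{X_i}\to\mathfrak{F}_i\to 0$ have the same ranks. They do not make the local epimorphisms $\overset{t}{\oplus}\mathcal{O}_{X_i}\to\mathfrak{F}_i$ agree on $X_i\cap X_j$. At best these differ on overlaps by automorphisms of $\overset{t}{\oplus}\mathcal{O}_{X_i\cap X_j}$, and gluing along those yields a locally free sheaf of rank $t$, not $\overset{t}{\oplus}\mathcal{O}_X$. The remark before Theorem \ref{B5} only goes in the restriction direction.

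Here is a concrete counterexample. Let $R=k[x,y]/(xy)$ and $X=\mathbb{P}^1_R$, with its standard cover $X_0=\Spec R[u]$, $X_1=\Spec R[v]$; both are connected and the overlap is nonempty. Put $\mathfrak{F}=\mathcal{O}_X(-1)/y\,\mathcal{O}_X(-1)$.
\begin{itemize}
\item $\mathfrak{F}\!\mid_{X_0}\cong (R[u]/(y))^{\sim}$, which is linked to $(R[u]/(x))^{\sim}$ and has $(1,1)$-ranks; likewise on $X_1$.
\item However, $\Gamma(X,\mathfrak{F})=H^0(\mathbb{P}^1_{k[x]},\mathcal{O}(-1))=0$, so every morphism $\overset{t}{\oplus}\mathcal{O}_X\to\mathfrak{F}$ is zero.
\item Hence $\mathfrak{F}$ has no presentation as in Definition \ref{B1}, $\overline{\lambda}\mathfrak{F}$ is undefined, and $\mathfrak{F}$ is not linked in the sense of Definition \ref{B.2}.
\end{itemize}
So the statement needs an extra hypothesis, for instance a global free presentation of $\mathfrak{F}$ (cf.\ Remark \ref{B21}(1)). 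The paper does not supply this either: the proof of Theorem \ref{C4} simply asserts that the glued sheaf ``has $(t,k)$-rank''.

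\textbf{Step three.} Your concern is well founded, and the paper glosses over it too. Abstract isomorphisms $\mathfrak{F}\!\mid_{U_i}\cong(\overline{\lambda}^{2}\mathfrak{F})\!\mid_{U_i}$ need not glue; compare $\mathcal{O}$ and $\mathcal{O}(1)$ on $\mathbb{P}^1$. Your stalkwise fallback via Theorem \ref{C2.3}(1) would not help, because stalkwise isomorphisms without a global map do not give a sheaf isomorphism.

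Your first remedy is correct once a global epimorphism $\varphi\colon\overset{t}{\oplus}\mathcal{O}_X\to\mathfrak{F}$ is assumed:
\begin{itemize}
\item Present $\overline{\lambda}\mathfrak{F}$ by $(\overset{t}{\oplus}\mathcal{O}_X)^*$.
\item Left exactness of $(-)^*$ gives $(\overline{\lambda}\mathfrak{F})^*=\ker\varphi^{**}$, hence a natural isomorphism $\overline{\lambda}^{2}\mathfrak{F}\cong\mathrm{Im}(\mathfrak{F}\to\mathfrak{F}^{**})$.
\item The map $\mathfrak{F}\to\mathfrak{F}^{**}$ is injective because its restriction to each $X_i$ is: each $\mathfrak{F}_i$ is a syzygy by Corollary \ref{C5}, and $\mathcal{H}om$ commutes with restriction.
\end{itemize}
With that hypothesis added, you get $\mathfrak{F}\cong\overline{\lambda}^{2}\mathfrak{F}$ without gluing any non-canonical isomorphisms.
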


\begin{proof}
Glueing connected schemes along nonempty intersections yields a connected scheme $X$.  
(For example, $X = (X_1\cup X_2)/{\sim}$ remains connected because $X_1\cap X_2\neq\varnothing$.)  
Thus the conclusion follows from Theorem~\ref{C4}.
\end{proof}

The next result concerns the existence of linked subsheaves.

\begin{prop}\label{L1}
Let $U\subseteq X$ be an open subset such that $\mathcal{O}(U)$ is not an integral domain.  
Then $\mathfrak{F}\!\mid_U$ has a linked subsheaf if and only if 
\[
\Ass(\mathfrak{F}(U))\cap \Ass(\mathcal{O}(U))\neq \varnothing.
\]
\end{prop}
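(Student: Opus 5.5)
The plan is to reduce the statement to the classical module-theoretic criterion and then transport it to sheaves using the affine dictionary of Theorem~\ref{C1} and Corollary~\ref{D}. Implicitly $U$ should be taken affine, say $U=\Spec A$ with $A=\mathcal{O}(U)$ and $M=\mathfrak{F}(U)$, so that $\mathfrak{F}\!\mid_U\cong\widetilde{M}$. If $U$ is not affine, I would first reduce to an affine open cover using Theorem~\ref{C2.3}; this is where the argument is most fragile. By Corollary~\ref{C5}(ii), a coherent subsheaf of $\widetilde{M}$ is linked if and only if it is a stable syzygy. Since subsheaves of $\widetilde{M}$ correspond to submodules of $M$, the statement becomes: $M$ has a nonzero stable submodule embeddable in a free $A$-module if and only if $\Ass M\cap\Ass A\neq\varnothing$. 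Here I take the zero sheaf to be excluded as trivial.

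\textbf{Sufficiency.} Take $\fp\in\Ass M\cap\Ass A$. Then $A/\fp$ embeds in $M$ via $1\mapsto m$ with $(0:_A m)=\fp$. It also embeds in $A$ via $1\mapsto a$ with $(0:_A a)=\fp$. Thus the cyclic submodule $N=Am\cong A/\fp$ is a syzygy. It remains to check that $N$ is stable. A free summand of $A/\fp$ would force $\fp=0$, that is, $A$ would be a domain. Since $A$ is not a domain, $\fp\neq 0$ and $N$ is stable. By Corollary~\ref{C5}(ii), $\widetilde{N}\subseteq\mathfrak{F}\!\mid_U$ is linked. This is exactly where the hypothesis that $\mathcal{O}(U)$ is not a domain is used.

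\textbf{Necessity.} Let $\mathfrak{G}\subseteq\mathfrak{F}\!\mid_U$ be a nonzero linked subsheaf. By Corollary~\ref{C5}(i), $N=\mathfrak{G}(U)$ is a nonzero submodule of $M$ that embeds in some $A^t$. Then
\[
\varnothing\neq\Ass N\subseteq \Ass M\cap\Ass(A^t)=\Ass M\cap\Ass A,
\]
using that associated primes of submodules are associated primes of the ambient module, and that $\Ass A^t=\Ass A$.

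\textbf{Main obstacle.} The delicate step is the translation between subsheaves and submodules, together with the globalization when $U$ is not affine. On an affine $U$, the quasi-coherent subsheaves of $\widetilde{M}$ are exactly the $\widetilde{N}$ for submodules $N\subseteq M$. For a general open $U$, I would work on an affine open $V\subseteq U$ that meets the support and contains a relevant associated point. I would then extend $\widetilde{A/\fp}$ by taking the closure of the image, that is, the subsheaf generated by the section $m$. Linkedness would be checked locally via Theorem~\ref{C2.3}(2).
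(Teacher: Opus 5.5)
Your argument is correct and is essentially the paper's proof. For sufficiency you embed $\mathcal{O}(U)/\fp$ in $\mathfrak{F}(U)$ and pass to $\widetilde{\mathcal{O}(U)/\fp}$; for necessity you use that a linked subsheaf is a syzygy, so its nonempty set of associated primes lies in $\Ass(\mathfrak{F}(U))\cap\Ass(\mathcal{O}(U))$. The only difference is that you verify linkedness of $\mathcal{O}(U)/\fp$ yourself through the stable-syzygy criterion of Corollary~\ref{C5}(ii), using $\fp\neq 0$, where the paper simply quotes [JS2, 2.5] and [MS, Prop.~1].

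The paper likewise tacitly takes $U$ affine, since it applies Theorem~\ref{C1} on $U$, so your non-affine detour is not needed to match it. As sketched it would not go through anyway. A local check via Theorem~\ref{C2.3}(2) already fails for $\widetilde{A/\fp}$ with $A=k[x,y]/(xy)$ and $\fp=(x)$: its restriction to $D(y)$ is $\mathcal{O}_{D(y)}$, which is free and hence not linked.
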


\begin{proof}
Suppose $\fp\in \Ass(\mathfrak{F}(U))\cap \Ass(\mathcal{O}(U))$.  
By \cite[2.5]{JS2} and \cite[page 592, Proposition~1]{MS},  
$\mathcal{O}(U)/\fp$ is a linked submodule of $\mathfrak{F}(U)$.  
Hence, by Theorem~\ref{C1}, 
$(\mathcal{O}(U)/\fp)^{\sim}$
is a linked subsheaf of $\mathfrak{F}\!\mid_U$.

Conversely, assume $\mathfrak{F}\!\mid_U$ contains a linked subsheaf $\mathfrak{F}'$.  
By definition, 
$\Ass(\mathfrak{F}'(U))\subseteq \Ass(\mathcal{O}(U)).$
Since $\mathfrak{F}'(U)\neq 0$, we have
$\varnothing\neq \Ass(\mathfrak{F}'(U))\subseteq \Ass(\mathfrak{F}(U)),$
and the claim follows.
\end{proof}

\section{linkage of Sheaves on Projective schemes}

In this section, we compare invariants of cohomology modules and the regularity of certain linked sheaves of modules.

Now assume $S$ is a graded ring and $X=\proj(S)$.  
For an $\mathcal{O}_X$-module $\mathfrak{F}$, define the graded cohomology modules
\[
H^i_*(X,\mathfrak{F}) := \bigoplus_{d\in\mathbb{Z}} H^i(X,\mathfrak{F}(d)).
\]
It is well known that, if $S$ is generated by $S_1$ over $S_0$ and $\mathfrak{F}$ is a coherent $\mathcal{O}_X$-module, then $\widetilde{H^0(X,\mathfrak{F})} \cong \mathfrak{F}$
(see \cite[2, 5.15]{H}). However, that for a finitely generated graded module $M$, the module $H^0_*(X, \widetilde{M})$ is not necessarily isomorphic to $M$. 
From now on, assume that $X$ is a projective scheme over a Noetherian ring $A$, equipped with a very ample invertible sheaf $\mathcal{O}_X(1)$, and
the sheaves are coherent on $X$ unless stated otherwise. 

\begin{rem}\label{A3}
(A corollary of Serre's theorem.)
\begin{itemize}
\item[(i)]  
There is a closed immersion $i: X\rightarrow\mathbb{P}^r_A$ of schemes over $A$, for some $r,$ such that $\mathcal{O}_X(1)= i^* \mathcal{O}_{\mathbb{P}^r_A}(1).$
\item[(ii)]  
$\mathfrak{F}$ admits a free resolution of the form  
\[
\cdots \rightarrow \bigoplus_{j=1}^{a_i}\mathcal{O}_X(a_{ij}) 
\rightarrow \cdots
\rightarrow \bigoplus_{j=1}^{a_2}\mathcal{O}_X(a_{1j})
\rightarrow \bigoplus_{j=1}^{a_1}\mathcal{O}_X(a_{0j})
\rightarrow \mathfrak{F} \rightarrow 0.
\]
In particular, $\mathfrak{F}$ is finitely presented and all the operations defined earlier apply.

\item[(iii)]
$H^i_*(X,\mathfrak{F})$ is a finitely generated $A$-module for all $i\ge 0$.
\end{itemize}
\end{rem}
\begin{exam}
Let $X =\proj k[x_0, x_1, x_2]$ be the projective plane over a field $k,$ and let
$\mathfrak{F}=\mathcal{O}_X(1).$ The sheaf $\mathfrak{F}$ is free of rank one. So, both 
$\overline{\Tr}\,\mathfrak{F}$ and $\overline{\lambda}\,\mathfrak{F}$ vanish.
\end{exam}
\begin{lem}

Let 
$k$ be an integer.  
Then
$\overline{\Tr}(\mathfrak{F}(k)) \cong (\overline{\Tr}\mathfrak{F})(-k)$ and $\overline{\lambda}(\mathfrak{F}(k)) \cong (\overline{\lambda}\mathfrak{F})(-k).$
\end{lem}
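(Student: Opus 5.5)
The plan is to twist a presentation of $\mathfrak{F}$ by $\mathcal{O}_X(k)$ and compare the two dual sequences \eqref{e2}. By Remark~\ref{A3}(ii), $\mathfrak{F}$ has a presentation
\[
\bigoplus_{j}\mathcal{O}_X(a_{1j}) \overset{\phi}{\longrightarrow} \bigoplus_{j}\mathcal{O}_X(a_{0j}) \overset{\varphi}{\longrightarrow} \mathfrak{F}\longrightarrow 0 .
\]
Since $\mathcal{O}_X(k)$ is invertible, tensoring with it is exact. This gives the presentation
\[
\bigoplus_{j}\mathcal{O}_X(a_{1j}+k) \overset{\phi(k)}{\longrightarrow} \bigoplus_{j}\mathcal{O}_X(a_{0j}+k) \overset{\varphi(k)}{\longrightarrow} \mathfrak{F}(k)\longrightarrow 0
\]
of $\mathfrak{F}(k)$. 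By the discussion at the start of Section~3, $\overline{\Tr}$ and $\overline{\lambda}$ are independent of the chosen presentation up to stabilization, so we may compute $\overline{\Tr}(\mathfrak{F}(k))$ and $\overline{\lambda}(\mathfrak{F}(k))$ from this twisted presentation.

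The key identity is
\[
\mathcal{H}om_{\mathcal{O}_X}(\mathfrak{G}\otimes\mathcal{O}_X(k),\mathcal{O}_X)\cong \mathcal{H}om_{\mathcal{O}_X}(\mathfrak{G},\mathcal{O}_X)\otimes \mathcal{O}_X(-k),
\]
natural in $\mathfrak{G}$. It holds because $\mathcal{O}_X(k)$ is locally free of rank one with dual $\mathcal{O}_X(-k)$. In particular $(\mathcal{O}_X(a))^*\cong\mathcal{O}_X(-a)$. Applying $(-)^*$ to the twisted presentation, naturality identifies the dualized sequence with the sequence \eqref{e2} for $\mathfrak{F}$ tensored with $\mathcal{O}_X(-k)$:
\[
0\to \mathfrak{F}^*(-k)\xrightarrow{\varphi^*(-k)} \bigoplus_j\mathcal{O}_X(-a_{0j})(-k)\xrightarrow{\phi^*(-k)} \bigoplus_j\mathcal{O}_X(-a_{1j})(-k).
\]
Taking cokernels and using exactness of $-\otimes\mathcal{O}_X(-k)$ (which commutes with cokernels), we get $\coker(\phi(k)^*)\cong \coker(\phi^*)(-k)$, i.e.\ $\overline{\Tr}(\mathfrak{F}(k))\cong(\overline{\Tr}\mathfrak{F})(-k)$. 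The same argument with $\varphi$ in place of $\phi$ gives $\overline{\lambda}(\mathfrak{F}(k))=\coker(\varphi(k)^*)\cong \coker(\varphi^*)(-k)=(\overline{\lambda}\mathfrak{F})(-k)$. Alternatively, it follows from $\overline{\lambda}=\Omega\overline{\Tr}$ and the fact that twisting commutes with syzygies.

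The step I expect to need the most care is checking that the identification of duals is compatible with the maps. That is, under $\mathcal{H}om(\mathfrak{G}(k),\mathcal{O}_X)\cong\mathfrak{G}^*(-k)$, the dual of $\phi\otimes 1$ must correspond to $\phi^*\otimes 1$, so that the two cokernels really agree. I would verify this locally on an affine cover trivializing $\mathcal{O}_X(1)$, where it reduces to the statement that twisting by a free rank-one module commutes with $\Hom_R(-,R)$ functorially. The local isomorphisms then glue by naturality. A second point is that Definition~\ref{B1} is phrased with untwisted free sheaves $\oplus\mathcal{O}_X$, whereas the presentation from Remark~\ref{A3}(ii) uses twisted ones. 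I would either interpret $\overline{\Tr}$ via such locally free presentations, as Remark~\ref{A3}(ii) suggests, or note that the stalkwise comparison of Section~3 makes the two versions agree up to stabilization. Either way, the isomorphism holds at least up to stable isomorphism, and it holds on the nose once the presentations are fixed compatibly.
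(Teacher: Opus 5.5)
Your proposal is correct and is the argument the paper compresses into ``straightforward by definition'': twist a presentation of $\mathfrak{F}$ by $\mathcal{O}_X(k)$, use $\mathcal{H}om_{\mathcal{O}_X}(\mathfrak{G}(k),\mathcal{O}_X)\cong\mathfrak{G}^*(-k)$ naturally in $\mathfrak{G}$, and take cokernels, which commute with the exact functor $-\otimes\mathcal{O}_X(-k)$. One small addition: the ``up to stable isomorphism'' version for arbitrary presentations requires stabilizing modulo sums of line bundles $\mathcal{O}_X(a)$ rather than modulo $\oplus\mathcal{O}_X$, because twisting turns $\mathcal{O}_X$-summands into $\mathcal{O}_X(-k)$-summands; with compatibly chosen presentations, as you note, the isomorphism holds on the nose.
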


\begin{proof}
It is straightforward by definition.
\end{proof}
This above relationship shows that $\overline{\lambda}^2(\mathfrak{F}(k)) \cong (\overline{\lambda}^2\mathfrak{F})(k)$. Hence, the definition of linkage for sheaves on projective schemes is the same as the previous definition. In other words, $\mathfrak{F}$ is linked if and only if $\mathfrak{F} \cong \overline{\lambda}^2\mathfrak{F}$.
\begin{exam}
Let $X = \mathbb{P}^2_k$ be the projective plane over a field $k$ and let $\mathfrak{F} = \mathcal{O}_X(1)$. Since $\mathfrak{F}$ is free of rank one, 
both $\overline{\Tr} \mathfrak{F}$ and $\overline{\lambda} \mathfrak{F}$ vanish. Hence $\mathfrak{F}$ is not linked. 
\end{exam}
\begin{thm}\label{D3}
Let 
$\mathfrak{F}$ and $\mathfrak{G}$ be sheaves of modules. Then the following statements are equivalent.
\begin{itemize}
\item[(i) ]
$\mathfrak{F}$ and $\mathfrak{G}$ are linked.
\item[(ii) ]
$i_*\mathfrak{F}$ and $i_*\mathfrak{G}$ are linked.
\item[(iii) ]
$H^0_*(X,\mathfrak{F})$ and $H^0_*(X,\mathfrak{G})$ are linked.
\item[(iv) ]
$H^0_*(\mathbb{P}^r_A,i_*\mathfrak{F})$ and $H^0_*(\mathbb{P}^r_A,i_*\mathfrak{G})$ are linked.
\end{itemize}
\end{thm}
\begin{proof}
First note that, by \cite[2. Ex.5.5]{H}, $i_*\mathfrak{F}$ is coherent on ${\mathbb{P}^r_A}$ (respectly $i_*\mathfrak{G}$) and the cohomology of $i_*\mathfrak{F}$ and $\mathfrak{F}$ are the same (respectly $i_*\mathfrak{G}$ and $\mathfrak{G}$). Hence, it is sufficient to show $(i) \Longleftrightarrow (iii). $

Using 
\ref{C1}, we have
\[
\mathfrak{F}\sim \mathfrak{G}
\ \Longleftrightarrow \
\widetilde{H^0_*(X,\mathfrak{F})} \sim \widetilde{H^0_*(X,\mathfrak{G})}
\ \Longleftrightarrow\
H^0_*(X,\mathfrak{F}) \sim H^0_*(X,\mathfrak{G}).
\]
\end{proof}

\begin{cor}
Let $\mathfrak{F}$ and $\mathfrak{G}$ be sheaves such that $\mathfrak{F}\sim\mathfrak{G}$.  
Then
$H^1_*(X,\mathfrak{G}^*)=0 $
and
$H^1_*(X,\mathfrak{F}^*)=0.$
\end{cor}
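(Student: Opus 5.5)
The plan is to reduce the vanishing of $H^1_*(X,\mathfrak{F}^*)$ to a depth statement about linked modules, using Theorem \ref{D3} and the classical facts about $\lambda$. By symmetry of the hypothesis $\mathfrak{F}\sim\mathfrak{G}$ it suffices to prove $H^1_*(X,\mathfrak{G}^*)=0$. The argument for $\mathfrak{F}^*$ is identical with the roles exchanged.

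\textbf{Step 1.} Take the free resolution of Remark \ref{A3}(ii), $\bigoplus_j\mathcal{O}_X(a_{1j})\overset{\phi}{\to}\bigoplus_j\mathcal{O}_X(a_{0j})\overset{\varphi}{\to}\mathfrak{G}\to 0$, and dualize. This gives the short exact sequence
\[
0\to \mathfrak{G}^*\to \mathcal{L}\to \overline{\lambda}\,\mathfrak{G}\to 0,\qquad \mathcal{L}:=\textstyle\bigoplus_j\mathcal{O}_X(-a_{0j}).
\]
Twisting by $d$ and summing over all $d\in\mathbb{Z}$ gives the long exact sequence
\[
H^0_*(X,\mathcal{L})\to H^0_*(X,\overline{\lambda}\mathfrak{G})\to H^1_*(X,\mathfrak{G}^*)\to H^1_*(X,\mathcal{L}).
\]

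\textbf{Step 2.} Here we use that $\mathcal{L}$ is a sum of twists of $\mathcal{O}_X$, together with the identification $\overline{\lambda}\mathfrak{G}\cong\mathfrak{F}$ given by the linkage. We want two facts: $H^1_*(X,\mathcal{L})=0$, and the map $H^0_*(X,\mathcal{L})\to H^0_*(X,\mathfrak{F})$ is surjective. With these, $H^1_*(X,\mathfrak{G}^*)=0$ follows immediately from the sequence in Step 1.

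For the surjectivity, I would pass to modules via Theorem \ref{D3}. Put $M=H^0_*(X,\mathfrak{G})$ and $N=H^0_*(X,\mathfrak{F})$, so that $M\sim N$ as graded modules and $N\cong\lambda M$. By Corollary \ref{C5}(ii) (or \cite{MS}), $N$ is then the cokernel of $M^+\to F^+$ for the free module $F$ covering $M$. Sheafifying by Theorem \ref{B3}, this module-level surjection $F^+\to\lambda M=N$ becomes the sheaf map $\mathcal{L}\to\mathfrak{F}$. Taking $H^0_*$ and comparing with $N$ then shows that $H^0_*(\mathcal{L})=F^+\to N$ is onto.

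\textbf{Step 3.} It remains to handle $H^1_*(X,\mathcal{O}_X)$, which is where I expect the main obstacle. For $X=\mathbb{P}^r_A$ with $r\ge 2$ this group vanishes. In general it is the local cohomology $H^2_{S_+}(S)$, which need not vanish. I would try first to work on $\mathbb{P}^r_A$ via $i_*$, as Theorem \ref{D3}(ii),(iv) permits. If that does not give the vanishing, I would avoid $H^1_*(\mathcal{L})$ altogether by a different identification. Since $\mathfrak{F}\cong\overline{\lambda}\mathfrak{G}$, the module $\Omega\Tr$ of $M$ computed from $H^0_*$, there is a graded exact sequence $0\to M^+\to F^+\to N\to0$ whose sheafification is exactly the sequence in Step 1. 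One then argues that $H^0_*$ applied to it stays exact at the $H^1$ term. This last step, which amounts to the identification $H^0_*(X,\mathfrak{G}^*)=M^+$ together with a depth argument, is the delicate point.
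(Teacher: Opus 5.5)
Your Steps 1 and 2 follow the paper's argument. The paper dualizes a presentation to get $0\to\mathfrak{F}^*\to\bigoplus_j\mathcal{O}_X(a_{1j})\to\mathfrak{G}\to 0$, applies $\Gamma_*$, and compares with the module sequence $0\to M^*\to\bigoplus_j S(a_{1j})\to N\to 0$ supplied by module linkage. Your Step 3 is a genuine gap, and it sits exactly where the paper's proof is also unjustified. The paper just writes \eqref{B.3} as ending in $H^1_*(X,\mathfrak{F}^*)\to 0$, citing Hartshorne's Theorem III.5.1. That amounts to assuming $H^1_*$ of a sum of twists of $\mathcal{O}_X$ vanishes, which is a fact about $\mathbb{P}^r_A$ with $r\ge 2$, not about an arbitrary projective $X$.

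Neither of your repairs works. Passing to $\mathbb{P}^r_A$ via $i_*$ changes nothing, since $H^1_*(\mathbb{P}^r_A,i_*\mathcal{L})=H^1_*(X,\mathcal{L})$. Moreover, $\mathcal{H}om_{\mathcal{O}_{\mathbb{P}^r_A}}(i_*\mathfrak{G},\mathcal{O}_{\mathbb{P}^r_A})$ is not $i_*(\mathfrak{G}^*)$; it is $0$ if $A$ is a domain and $X\neq\mathbb{P}^r_A$. So linkage on $\mathbb{P}^r_A$ carries no information about $H^1_*(X,\mathfrak{G}^*)$. Your second route also fails: surjectivity of $H^0_*(X,\mathcal{L})\to H^0_*(X,\mathfrak{F})$ only yields an injection $H^1_*(X,\mathfrak{G}^*)\hookrightarrow H^1_*(X,\mathcal{L})$, never vanishing.

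In fact no argument can close this gap, because the corollary fails in the stated generality. Let $S=k[x,y,z]/(xy)$ and $X=\proj S\subset\mathbb{P}^2_k$, the union of the lines $L_1=V(x)$ and $L_2=V(y)$. Put $\mathfrak{G}=(S/xS)^{\sim}=\mathcal{O}_{L_1}$ and $\mathfrak{F}=(S/yS)^{\sim}=\mathcal{O}_{L_2}$.
\begin{itemize}
\item Dualizing $\mathcal{O}_X(-1)\xrightarrow{x}\mathcal{O}_X\to\mathfrak{G}\to 0$ gives $\mathfrak{G}^*=(0:_Sx)^{\sim}=(yS)^{\sim}\cong\mathcal{O}_{L_1}(-1)$ and $\overline{\lambda}\,\mathfrak{G}\cong(S/yS)^{\sim}=\mathfrak{F}$.
\item Symmetrically $\overline{\lambda}\,\mathfrak{F}\cong\mathfrak{G}$, so $\mathfrak{F}\sim\mathfrak{G}$. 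This is the graded analogue of the paper's example over $k[x,y]/(xy)$, and $H^0_*(X,\mathfrak{G})=S/xS$ and $H^0_*(X,\mathfrak{F})=S/yS$ are linked modules.
\item Yet $H^1(X,\mathfrak{G}^*(-1))\cong H^1(\mathbb{P}^1_k,\mathcal{O}(-2))=k\neq 0$, and likewise $H^1_*(X,\mathfrak{F}^*)\neq 0$.
\item In your Step 1 sequence, where $\mathcal{L}=\mathcal{O}_X$, this class maps isomorphically onto $H^1(X,\mathcal{O}_X(-1))\cong H^2_{S_+}(S)_{-1}=k$. This is precisely the obstruction you anticipated.
\end{itemize}

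The argument, yours and the paper's, becomes correct once one assumes $H^1_*(X,\mathcal{O}_X)=0$, for example $X=\mathbb{P}^r_A$ with $r\ge 2$. That is what the paper's citation of Hartshorne tacitly assumes.

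A minor point in Step 2: the identification $H^0_*(X,\mathcal{L})=F^+$ needs $H^0_*(X,\mathcal{O}_X)=S$. You do not actually need it. The surjection $F^+\to N=H^0_*(X,\mathfrak{F})$ factors through $H^0_*(X,\mathcal{L})$ by naturality of $N\to H^0_*(X,\widetilde{N})$, so surjectivity holds anyway.
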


\begin{proof}
Set $M := \Gamma^*(X,\mathfrak{F})$ and  
$N := \Gamma^*(X,\mathfrak{G})$.  
By \ref{B21} and \ref{A3}, $\mathfrak{G}$ is coherent, and $M$ and $N$ are finitely generated.  
By definition, there exists an exact sequence
\[
0 \longrightarrow \mathfrak{F}^*
   \longrightarrow \bigoplus_{j=1}^{a_1}\mathcal{O}_X(a_{1j})
   \longrightarrow \mathfrak{G}
   \longrightarrow 0.
\]
Sheafifying and using \cite[Thm.\ 5.1]{H}, we obtain the exact sequence
\begin{equation}\label{B.3}
0 \rightarrow \Gamma(X,\mathfrak{F}^*)
  \rightarrow \Gamma\!\left(X,\bigoplus_{j=1}^{a_1}\mathcal{O}_X(a_{1j})\right)
  \rightarrow \Gamma(X,\mathfrak{G})
  \rightarrow H^1_*(X,\mathfrak{F}^*)
  \rightarrow 0.
\end{equation}

On the other hand,
$\Gamma(X,\mathfrak{F}^*)
\cong \Hom(\Gamma(X,\mathfrak{F}), S)
\cong M^*,$
and by Theorem~\ref{B} we have an exact sequence
\[
0 \longrightarrow M^*
  \longrightarrow \bigoplus_{j=1}^{a_1} S(a_{1j})
  \longrightarrow N
  \longrightarrow 0.
\]
Comparing with \eqref{B.3}, we conclude that
$H^1_*(X,\mathfrak{F}^*) = 0.$
A similar argument gives $H^1_*(X,\mathfrak{G}^*)=0$.
\end{proof}
Assume $k$ is a field, $S = k[x_0,\dots,x_n]$, and $X = \proj(S) = \mathbb{P}^n_k$.  Let $\mathfrak{F}$ be a coherent $\mathcal{O}_X$-sheaf of modules. $\mathfrak{F}$ is called $m$-regular if
\[
H^i(\mathbb{P}^n, \mathfrak{F}(m-i)) = 0 \quad \text{for all } i \in \mathbb{N}.
\]  
If $\mathfrak{F}$ is $m$-regular, then it is also $(m+1)$-regular.  So,
the regularity of $\mathfrak{F}$ is defined by
\[
\reg(\mathfrak{F}) := \min \{ m \mid \mathfrak{F} \text{ is } m\text{-regular} \}.
\]

\begin{note}
There is a relation between regularity and minimal free resolutions. Let
$M := H^0_*(\mathbb{P}^n, \mathfrak{F}),$
and assume $M$ is finitely generated. Let a minimal free resolution of $M$ be
\[
0 \rightarrow \dots \rightarrow \bigoplus_{j=1} S(-a_{ij}) \rightarrow \dots \rightarrow \bigoplus_{j=1} S(-a_{1j}) \rightarrow \bigoplus_{j=1} S(-a_{0j}) \rightarrow M \rightarrow 0.
\]
Sheafifying gives a free resolution of $\mathfrak{F}$:
\begin{equation}\label{A}
0 \rightarrow \dots \rightarrow \bigoplus_{j=1} \mathcal{O}_X(-a_{ij}) \rightarrow \dots \rightarrow \bigoplus_{j=1} \mathcal{O}_X(-a_{1j}) \rightarrow \bigoplus_{j=1} \mathcal{O}_X(-a_{0j}) \rightarrow \mathfrak{F} \rightarrow 0.
\end{equation}
Hence
$\reg(\mathfrak{F}) = \max_{i,j} \{ a_{ij} - i \}.$
\end{note}

\begin{thm}\label{D1}
Under the above assumptions and the free resolution \eqref{A}, assume $a_{i,0} < a_{i,1} < \dots$ for all $i$. Then
\[
\reg(\overline{\lambda}\mathfrak{F}) = \max \{ -a_{0,1}, \reg(\mathfrak{F}^*) - 1 \}, \quad
\reg(\overline{\Tr}\mathfrak{F}) = \max \{ -a_{1,1}, \reg(\overline{\lambda}\mathfrak{F}) - 1 \}.
\]
In particular, in the case where $\mathfrak{F} \sim \mathfrak{G}$, $\reg(\mathfrak{G}) = \max \{ -a_{0,1}, \reg(\mathfrak{F}^*) - 1 \}.$
\end{thm}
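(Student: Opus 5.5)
The plan is to read off both regularities from the two short exact sequences of Definition~\ref{B1}, now written with the twisted free sheaves of the resolution \eqref{A}. The key tool is the standard behaviour of regularity on a short exact sequence $0\to\mathfrak{A}\to\mathfrak{B}\to\mathfrak{C}\to 0$ of coherent sheaves on $\mathbb{P}^n_k$:
\[
\reg(\mathfrak{C})\le\max\{\reg(\mathfrak{B}),\reg(\mathfrak{A})-1\},
\]
with equality whenever the regularities of the terms are distinct, or more generally when no cancellation occurs in the long exact cohomology sequence.

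\textbf{Step 1 (setup).} Dualize the first two terms of \eqref{A}. Since $\mathcal{H}om(\mathcal{O}_X(-a),\mathcal{O}_X)\cong\mathcal{O}_X(a)$, we obtain
\[
0\to\mathfrak{F}^*\to\bigoplus_j\mathcal{O}_X(a_{0j})\to\overline{\lambda}\mathfrak{F}\to 0
\]
and
\[
0\to\overline{\lambda}\mathfrak{F}\to\bigoplus_j\mathcal{O}_X(a_{1j})\to\overline{\Tr}\mathfrak{F}\to 0.
\]

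\textbf{Step 2 (regularity of the middle terms).} On $\mathbb{P}^n$ we have $\reg(\mathcal{O}(a))=-a$. Hence the regularity of $\bigoplus_j\mathcal{O}_X(a_{ij})$ is $\max_j\{-a_{ij}\}$, which is attained at the index singled out by the ordering hypothesis $a_{i,0}<a_{i,1}<\cdots$. Under the paper's indexing convention this is the value written $-a_{i,1}$ in the statement. I would fix this bookkeeping carefully at the outset.

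\textbf{Step 3 (apply the short exact sequence bound).} The inequality above immediately gives the upper bounds
\[
\reg(\overline{\lambda}\mathfrak{F})\le\max\{-a_{0,1},\reg(\mathfrak{F}^*)-1\},\qquad
\reg(\overline{\Tr}\mathfrak{F})\le\max\{-a_{1,1},\reg(\overline{\lambda}\mathfrak{F})-1\}.
\]

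\textbf{Step 4 (reverse inequality: the main obstacle).} The reverse inequality is the hard part, since regularity is generally only subadditive. I would argue through the long exact cohomology sequence. For $i\ge1$, the free sheaf has $H^i(\mathcal{O}(a)(m-i))=0$ except when $i=n$, so the sequence gives
\[
H^{i}(\overline{\lambda}\mathfrak{F}(m-i))\cong H^{i+1}(\mathfrak{F}^*(m-i))\quad\text{for } 1\le i\le n-2,
\]
which transports the vanishing exactly, shifted by one. This yields $\reg(\overline{\lambda}\mathfrak{F})\ge\reg(\mathfrak{F}^*)-1$ in the relevant range.

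The remaining cases are $H^0$ and the top cohomology, where the free summand's own obstruction appears. There I would use the distinctness supplied by the ordering hypothesis, together with minimality of the resolution. Minimality means $\mathfrak{F}^*\to\bigoplus\mathcal{O}(a_{0j})$ has no unit entries, so the extremal summand cannot be cancelled. This should show the free term's regularity $-a_{0,1}$ is genuinely attained whenever it exceeds $\reg(\mathfrak{F}^*)-1$. If the cohomological argument at the top degree stalls, I would fall back on the graded side via Theorem~\ref{D3}: compare the corresponding exact sequences of $H^0_*$-modules and read regularity off graded Betti numbers as in the Notation preceding the theorem.

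\textbf{Step 5 (second formula and the linked case).} The second formula follows identically from the second sequence. For the final assertion, $\mathfrak{F}\sim\mathfrak{G}$ means $\mathfrak{G}\cong\overline{\lambda}\mathfrak{F}$ by Definition~\ref{B.2}, so it is just the first formula restated.
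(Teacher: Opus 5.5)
Your Steps 1–3 are sound: the two dual sequences and subadditivity give the upper bounds. Step 4, however, cannot be completed, because the reverse inequality is false for sheaf regularity.

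The equality clause of your key tool also fails. For $0\to\mathfrak{A}\to\mathfrak{B}\to\mathfrak{C}\to0$, ``equality when the two regularities are distinct'' is true for graded modules, where $H^0_{\mathfrak m}$ and $H^1_{\mathfrak m}$ count. It is false for sheaves, where $H^0$ plays no role. In the cohomology sequence of $0\to\mathfrak{F}^*\to\bigoplus_j\mathcal{O}_X(a_{0j})\to\overline{\lambda}\mathfrak{F}\to0$, your isomorphisms only reach $H^2,\dots,H^{n-1}$ of $\mathfrak{F}^*$. The group $H^1(\mathfrak{F}^*(d))$ appears only as a quotient of $H^0(\overline{\lambda}\mathfrak{F}(d))$, which regularity does not see. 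At the top, the image of $H^n(\mathfrak{F}^*(d))$ can kill $H^n$ of the free term regardless of minimality or distinct twists.

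Here is a counterexample. On $X=\mathbb{P}^2_k$, $S=k[x,y,z]$, let $\mathfrak{F}=\coker\bigl(\mathcal{O}_X\xrightarrow{(x,\,y^2,\,z^3)}\mathcal{O}_X(1)\oplus\mathcal{O}_X(2)\oplus\mathcal{O}_X(3)\bigr)$.
\begin{itemize}
\item Since $H^1_*(X,\mathcal{O}_X)=0$, this is the sheafified minimal resolution of $H^0_*(X,\mathfrak{F})$. The twists are distinct, and $\mathfrak{F}$ is a rank-$2$ bundle with no line-bundle summand, because $H^0_*(X,\mathfrak{F})$ is not free.
\item Since $x,y^2,z^3$ have no common zero, the dual map $\mathcal{O}_X(-1)\oplus\mathcal{O}_X(-2)\oplus\mathcal{O}_X(-3)\to\mathcal{O}_X$ is surjective. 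Hence $\overline{\lambda}\mathfrak{F}\cong\mathcal{O}_X$ and $\reg(\overline{\lambda}\mathfrak{F})=0$.
\item On the other hand $H^1(\mathfrak{F}^*(3))\cong (S/(x,y^2,z^3))_3\neq 0$, so $\reg(\mathfrak{F}^*)=5$ and the right-hand side is at least $4$. Note also $\reg(\bigoplus_j\mathcal{O}_X(a_{0j}))=3\neq 4=\reg(\mathfrak{F}^*)-1$, which refutes the ``distinct regularities'' clause directly.
\item Here $\overline{\Tr}\mathfrak{F}=0$ too, while the second formula would give $0$, so Step 5 fails as well.
\end{itemize}

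Your fallback, reading regularity off graded Betti numbers, is exactly the paper's own argument. The paper splices a resolution of $\mathfrak{F}^*$ onto $\bigoplus_j\mathcal{O}_X(a_{0j})$ and takes $\max\{b_{ij}-i\}$. This has the same defect. The spliced complex resolves the graded module $N=\coker\bigl(H^0_*(X,\mathfrak{F}^*)\to\bigoplus_j S(a_{0j})\bigr)$. For $n\ge 2$ there is an exact sequence $0\to N\to H^0_*(X,\overline{\lambda}\mathfrak{F})\to H^1_*(X,\mathfrak{F}^*)\to0$. So the Betti numbers compute (at best) $\reg N$, which can exceed $\reg(\overline{\lambda}\mathfrak{F})$. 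In the example $N=(x,y^2,z^3)$ has $\reg N=4$, while $\widetilde{N}=\mathcal{O}_X$. Theorem~\ref{D3} concerns linkedness only and gives no control over $H^0_*(X,\overline{\lambda}\mathfrak{F})$.

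What your approach can actually prove is the upper bound of Step 3. Equality holds under extra hypotheses such as $n\ge2$, $H^1_*(X,\mathfrak{F}^*)=0$, and $\mathfrak{F}$ having no line-bundle summand. Then $N=H^0_*(X,\overline{\lambda}\mathfrak{F})$, the spliced resolution is minimal, and sheaf and module regularity agree. The second formula similarly needs $H^1_*(X,\overline{\lambda}\mathfrak{F})=0$.
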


\begin{proof}
Let $\mathfrak{F}^*$ have the free resolution
\begin{equation}\label{A1}
0 \rightarrow \dots \rightarrow \bigoplus_{j=1} \mathcal{O}_X(-b_{1j}) \rightarrow \bigoplus_{j=1} \mathcal{O}_X(-b_{0j}) \rightarrow \mathfrak{F}^* \rightarrow 0,
\end{equation}
with $b_{i,0} < b_{i,1} < \dots$ for all $i$.  
By (\ref{e2}), the free resolutions of $\overline{\Tr}\mathfrak{F}$ and $\overline{\lambda}\mathfrak{F}$ are
\[
\dots \rightarrow \bigoplus_{j=1} \mathcal{O}_X(-b_{1j}) \rightarrow \bigoplus_{j=1} \mathcal{O}_X(-b_{0j}) \rightarrow \bigoplus_{j=1} \mathcal{O}_X(a_{0j}) \rightarrow \bigoplus_{j=1} \mathcal{O}_X(a_{1j}) \rightarrow \overline{\Tr}\mathfrak{F} \rightarrow 0,
\]
and
\[
\dots \rightarrow \bigoplus_{j=1} \mathcal{O}_X(-b_{1j}) \rightarrow \bigoplus_{j=1} \mathcal{O}_X(-b_{0j}) \rightarrow \bigoplus_{j=1} \mathcal{O}_X(a_{0j}) \rightarrow \overline{\lambda}\mathfrak{F} \rightarrow 0.
\]
Hence
$\reg(\overline{\lambda}\mathfrak{F}) = \max_j \{-a_{0,1}, b_{0,j}-1, b_{1,j}-2, \dots \} = \max \{-a_{0,1}, \reg(\mathfrak{F}^*) - 1\},$
and
\[
\reg(\overline{\Tr}\mathfrak{F}) = \max_j \{-a_{1,1}, -a_{0,1}-1, b_{0,j}-2, b_{1,j}-3, \dots \} = \max \{-a_{1,1}, \reg(\overline{\lambda}\mathfrak{F}) - 1\}.
\]
\end{proof}

Let 
$X$ be a projective scheme over $k$ and $\mathcal{O}_X(1)$ be a very ample invertible sheaf on $X.$ 
Then,
a polynomial $P_{\mathfrak{F}}(z)\in \mathbb{Q}[z]$ exists such that, for all $n\in \mathbb{Z}$, $\chi(\mathfrak{F} (n))= P(n)$ where $\chi (\mathfrak{F})$ is the Euler charactistic of $\mathfrak{F}$. $P(z)$ is called the Hilbert polynomial of $\mathfrak{F}$ with respect to the sheaf $\mathcal{O}_X(1)$.
If $\dim H^0_*(X,\mathfrak{F})>0$, then its Hilbert polynomial can be written in the form $$P_{\mathfrak{F}}(z)= h_0(\mathfrak{F}) \begin{pmatrix}
z  \\
d-1
\end{pmatrix}+h_1(\mathfrak{F}) \begin{pmatrix}
z  \\
d-2
\end{pmatrix}+ ... + h_{d-1}(\mathfrak{F})$$ where $h_i(\mathfrak{F})$ is integer, for all $i.$
The degree and the index of regularity of $M$ are defined by the coefficients of $h_i(M)$. If $\dim M>0$ then $h_0(M)$ is called the degree of $M$, $\deg (M)$. Otherwise the degree of $M$ is the length of $M$, $l(M).$ Also, the index of regularity of $M$, denoted by $r(M)$, is $$r(M)=inf \{i \in \mathbb{Z}\mid h_M(j)=P_M(j) \text{ for all } j\geq i \}.$$
Inspired by these definitions and by the fact that the Hilbert polynomial of $\mathfrak{F}$ with respect to the sheaf $\mathcal{O}_X(1)$ is the same as the Hilbert polynomial of the module $M,$ we define the degree and index of regularity of the sheaves as follows:
\begin{defn}
The degree of $\mathfrak{F}$, denoted by $\deg (\mathfrak{F}),$ is defined as $(\dim \Supp \mathfrak{F})!$ times the degree of $M,$ and the index of regularity of $\mathfrak{F}$, denoted by $r(\mathfrak{F})$, is taken to be the index of regularity of $M.$
\end{defn}
Nagel gave a definition of the linkage of modules over Gorenstein rings. In his work, he pointed out that Martsinkovsky and Stocker's definition is a special case of his concept of linkage of modules. Therefore, the results that Nagel obtained about the degree and the index of regularity of linked module can be generalized to sheaves of modules. The following theorem states this.
\begin{thm}\label{D2}
Let $\mathfrak{F}$ and $\mathfrak{G}$ be a sheaf such that $\mathfrak{F}\sim\mathfrak{G}$ and $...
\rightarrow \bigoplus_{j=1}^{a_0}\mathcal{O}_X(-a_{0j})
 \rightarrow \mathfrak{F} \rightarrow 0$ is a free resolution of $\mathfrak{F}.$ Put $s:=\min\{a_{0j}\}+\max\{a_{0j}\}.$ 
Then, the following statements hold.
\begin{itemize}
\item[ (i) ]
$\deg (\mathfrak{G})=a_0 - \deg (\mathfrak{F}),$
and if in addition $n\geq 1,$ then 
$$h_1(\mathfrak{G})= \frac{s-2n}{2(n!)}(\deg \mathfrak{F}- \deg \mathfrak{G})+ h_1(\mathfrak{F})$$
\item[ (ii) ] If $H^*_0(\mathfrak{F})$ is cohen-Macaulay then
$$P_{\mathfrak{F}}(z)=P_{\mathcal{O}_X}(z)+ (-1)^{n+1} P_{\mathfrak{G}}(s-n-1-z)$$
and
$$h_z(\mathfrak{G})=h_z(\mathcal{O}_X)+ (-1)^{n}( h_{s-n-1-z}(\mathfrak{F})- P_{\mathfrak{F}}(s-n-1-z)).$$

\end{itemize}
\end{thm}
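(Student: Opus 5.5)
The plan is to reduce Theorem~\ref{D2} to the corresponding statements for linked graded modules over a Gorenstein ring, due to Nagel, and then to transport the numerical identities back to sheaves.

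Set $M:=H^0_*(X,\mathfrak{F})$ and $N:=H^0_*(X,\mathfrak{G})$. By Theorem~\ref{D3}, $\mathfrak{F}\sim\mathfrak{G}$ forces $M\sim N$ in the sense of Martsinkovsky--Strooker over $S$. Since $S=k[x_0,\dots,x_n]$ is Gorenstein, this is a special case of Nagel's linkage, realized through the free module $\bigoplus_{j=1}^{a_0}S(-a_{0j})$ that sheafifies to the given resolution of $\mathfrak{F}$. By the remark before the definition of degree, $P_{\mathfrak{F}}=P_M$ and $P_{\mathfrak{G}}=P_N$, so $h_i(\mathfrak{F})=h_i(M)$ and $h_i(\mathfrak{G})=h_i(N)$. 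Also $P_{\mathcal{O}_X}=P_S$. The degrees of the sheaves agree with those of the modules up to the fixed factor $(\dim\Supp)!$. The first step is to check that this factor is the same for $\mathfrak{F}$, $\mathfrak{G}$ and $\mathcal{O}_X$. Linked modules over a Gorenstein ring are unmixed of the same dimension as the ring, so all three supports have the same dimension.

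For (i), I would use the defining sequence from the preceding corollary,
\[
0\to M^*\to \bigoplus_{j=1}^{a_0} S(a_{0j})\to N\to 0 .
\]
Additivity of Hilbert polynomials then gives $P_N(z)=\sum_j P_S(z+a_{0j})-P_{M^*}(z)$. Nagel's duality identifies $M^*$ with $\Hom(M,S)$, and over a maximal Cohen--Macaulay-type situation its leading coefficients agree with those of $M$ up to a shift. Comparing the top coefficients yields $\deg\mathfrak{G}=a_0-\deg\mathfrak{F}$ (normalized by $\deg\mathcal{O}_X$). Comparing the next coefficients produces the term involving $\sum_j a_{0j}$, which, after the symmetrization $s=\min+\max$ used by Nagel, gives the stated formula for $h_1(\mathfrak{G})$.

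For (ii), if $M$ is Cohen--Macaulay, local duality gives $H^i_{\mathfrak m}(M)=0$ for $i<\dim M$. The top local cohomology is then dual to $\Ext$ into $S(-n-1)$, which is where the shift $s-n-1-z$ and the sign $(-1)^{n+1}$ come from. I would quote Nagel's identity $P_M(z)=P_S(z)+(-1)^{n+1}P_N(s-n-1-z)$ and translate it via $P_{\mathfrak{F}}=P_M$. The $h_z$ identity then follows by comparing the Hilbert functions and the Hilbert polynomials in degree $s-n-1-z$, using Serre's formula $h-P=\sum(-1)^i\dim H^i_{\mathfrak m}$.

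The main obstacle is the normalization bookkeeping: the factor $(\dim\Supp)!$ in the sheaf degree, and the identification of Nagel's shift with $s$ rather than $\sum a_{0j}$. I would handle this first by testing the formulas on the example $R=k[x,y]/(xy)$ lifted to $\mathbb{P}^n$. If the constants disagree, I would restate $s$ via the canonical module twist $\omega_S=S(-n-1)$.
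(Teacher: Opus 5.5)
Your reduction is the paper's route: pass to $M=H^0_*(X,\mathfrak{F})$ and $N=H^0_*(X,\mathfrak{G})$ via Theorem~\ref{D3}, identify Hilbert polynomials, and appeal to Nagel. The paper's proof is nothing more than the citation of Nagel's 4.3. Your rank argument for the degree is sound. From $0\to M^*\to\bigoplus_j S(a_{0j})\to N\to 0$ and $\operatorname{rank}M^*=\operatorname{rank}M$ you get $e(N)=a_0-e(M)$ for the multiplicities. That is the stated formula only if $\deg$ means multiplicity. With the paper's factor $(\dim\Supp)!$ applied to $h_0$, you get $n!\,a_0-\deg\mathfrak{F}$ instead.

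The genuine gap is the $h_1$ step, and it cannot be closed. The second coefficient of $P_{M^*}$ does not agree with that of $M$ ``up to a shift''. The top two coefficients of a graded $S$-module of full dimension are determined by its rank $e$ and first Chern class $c_1$; in the paper's basis $h_1=ne+c_1$. Since $M$ and $M^*$ are dual in codimension one, $c_1(M^*)=-c_1(M)$. Done correctly, your comparison yields
\[
h_1(\mathfrak{G})-h_1(\mathfrak{F})=n\bigl(e(\mathfrak{G})-e(\mathfrak{F})\bigr)+\sum_j a_{0j},
\]
and no ``symmetrization'' converts $\sum_j a_{0j}$ into the stated expression in $s$.

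In fact the stated identity fails. On $\mathbb{P}^2$ let $\mathfrak{F}=\mathcal{I}_p$ for a point $p$, presented by $\mathcal{O}(-1)^2$. Then $\mathfrak{F}^*=\mathcal{O}$, $\overline{\lambda}\mathfrak{F}=\coker(\mathcal{O}\to\mathcal{O}(1)^2)\cong\mathcal{I}_p(2)$, and $\overline{\lambda}(\mathcal{I}_p(2))\cong\mathcal{I}_p$. So $\mathfrak{F}\sim\mathfrak{G}:=\mathcal{I}_p(2)$ with $\deg\mathfrak{F}=\deg\mathfrak{G}$, and (i) would force $h_1(\mathfrak{G})=h_1(\mathfrak{F})$. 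But $P_{\mathfrak{F}}(z)=\binom{z}{2}+2z$ and $P_{\mathfrak{G}}(z)=P_{\mathfrak{F}}(z+2)=\binom{z}{2}+4z+5$, so $h_1$ jumps from $2$ to $4$, exactly as the corrected formula predicts. Your fallback of ``restating $s$'' would therefore prove a different theorem, and the paper's bare citation does not supply a valid argument either.

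Likewise, (ii) is vacuous in your setup. A linked $M$ is torsion-free of dimension $n+1$ over the regular ring $S$. If it is Cohen--Macaulay it is free by Auslander--Buchsbaum, contradicting stability. Comparing leading coefficients in the first identity of (ii) would also give $e(\mathfrak{F})+e(\mathfrak{G})=1$. So the local-duality argument you sketch never comes into play.
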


%
\begin{proof}
 It is straightforward by definition and using \cite[4.3]{N}.
\end{proof}


This work is based upon research funded by Iran National Science Foundation (INSF) under project no.4015119.
\bibliographystyle{amsplain}

\end{document}